\newtheorem{thm}{Theorem}%[section]
\newtheorem{prop}{Proposition}%[section]
\newtheorem{lem}{Lemma}%[section]
\newtheorem{cor}{Corollary}%[section]
\newtheorem{rem}{Remark}%[section]
\newtheorem{ass}{Assumption}%[section]
\newtheorem{exam}{Example}%[section]
\def\pn{\par\smallskip\noindent}
\def\it proof{\pn {Proof.} }
\begin{document}
\title{Chebyshev Center of the Intersection of Balls: Complexity, Relaxation and Approximation
\thanks{This research was supported  by National Science Fund for Excellent Young Scholars 11822103 and
by NSFC under grants  11571029, 11801173 and  11771056.}
}
%\subtitle{Do you have a subtitle?\\ If so, write it here}

\titlerunning{ Chebyshev Center of the Intersection of Balls}        % if too long for running head

\author{Yong Xia \and Meijia Yang \and Shu Wang\footnote{Corresponding author} }

%\authorrunning{Short form of author list} % if too long for running head

\institute{
Y. Xia \and M. Yang \at
             LMIB of the Ministry of Education,
              School of
Mathematics and System Sciences, Beihang University, Beijing,
100191, P. R. China
              \email{yxia@buaa.edu.cn (Y. Xia); 15201644541@163.com (M. Yang)}
\and S. Wang \at
College of Science, North China Institute of Science and Technology, Hebei,
065201, P. R. China
\email{wangshu.0130@163.com}
 }

\date{Received: date / Accepted: date}
% The correct dates will be entered by the editor

\maketitle

\begin{abstract}
We study the $n$-dimensional problem of finding the smallest ball enclosing the intersection of $p$ given balls, the so-called Chebyshev center problem (${\rm CC_B}$). It is a minimax optimization problem and the inner maximization is a uniform quadratic optimization problem (${\rm UQ}$).
 When $p\le n$,  (${\rm UQ}$) is known to enjoy a strong duality and consequently (${\rm CC_B}$) is solved via a standard convex quadratic programming (${\rm SQP}$).
 In this paper, we first prove that (${\rm CC_B}$) is  NP-hard and the special case when $n=2$ is  strongly polynomially solvable.
 With the help of a newly introduced linear programming relaxation (LP),  the (${\rm SQP}$) relaxation is reobtained more directly and the first
approximation bound for the solution obtained by (${\rm SQP}$)  is established for the hard case $p>n$. Finally, also based on (LP), we show that (${\rm CC_B}$) is polynomially solvable  when either $n$ or $p-n(>0)$ is fixed.
%$p=n+q$ with a fixed integer number q or the dimension $n$ is fixed.

 \keywords{Chebyshev Center\and Minimax \and   Nonconvex Quadratic Optimization\and  Semidefinite Programming\and Strong Duality \and Linear Programming\and Approximation \and Complexity  }
\subclass{90C47, 90C26, 90C20 }
\end{abstract}

\section{Introduction}
The problem of finding the circle of smallest radius enclosing a given finite set of points in the plane was introduced by Sylvester in 1857 \cite{SJJ1857}. It is equivalent to detecting the Chebyshev center of the convex hull of the given points. This problem (and the corresponding problem in Euclidean space of any constant dimension) can be solved in linear time \cite{M,W91}. There are some other easy-to-solve Chebyshev center problems. For example, when the set is polyhedral and the estimation error is $l_\infty$ norm, the Chebyshev center can be solved by a linear programming problem \cite{MT85}. When the set is the intersection of two ellipsoids and the problem is defined on the complex domain, the Chebyshev center problem can be recast by a semidefinite programming \cite{BEl07} based on strong Lagrangian duality \cite{BEl06}. Generally,  finding the exact Chebyshev center of a convex set (e.g., the intersection of multiple ellipsoids \cite{EBT}) is a hard problem.

Chebyshev centering problem has some applications in optimization. It serves as a key subproblem in each iteration for solving constrained optimization and equilibrium problems \cite{BBY,NP}. The other example is that the problem of finding the optimal $1$-network\footnote{Generally, the optimal $N$-network of a compact set $C\subset \Bbb R^2$ is to solve the optimization problem $\inf_{S\in \sum_N}h(C,S)$, where $\sum_N$ is the set of all
nonempty sets containing at most $N$ points in $\Bbb R^2$ and $h(C,S)=\sup_{x\in C}\inf_{y\in S}\|x-y\|$ is the Hausdorff deviation of a set $C$ from the other set $S$ \cite{LU}.} of a compact set $C\subseteq \Bbb R^2$ can be reduced to finding the Chebyshev center and Chebyshev radius for $C$ \cite{LU}.
%Besides, the methods of Chebyshev centers can be applied for solving optimization or equilibrium problems according to a procedure which can generate a sequence of nested sets with their Chebyshev centers \cite{BBY,NP}.

In this paper, we study the problem of finding the Chebyshev center of the intersection of balls. Mathematically, it can be reformulated as
\begin{equation}
{\rm (CC_B)}~~\min_{z}\max_{x\in \Omega}\|x-z\|^2, \label{ccb}
\end{equation}
where $\Omega=\{x\in \Bbb R^n: \|x-a_i\|^2\leq r_i^2,~i=1,\ldots, p\}$, $r_i\in(0,+\infty)$  and $\|\cdot\|$ denotes the $l_2$ norm.  Throughout the paper, we assume that $\Omega$ contains a nonempty interior. Geometrically, $({\rm CC_B})$ is to find the smallest ball (centering at $z$)
enclosing $\Omega$.

The problem (${\rm CC_B}$) was first introduced by Beck \cite{Be07}, where it was shown to be equivalent to a standard quadratic programming reformulation (and hence globally solved in polynomial time) when $p\leq n-1$. Two years later, this sufficient assumption is relaxed to $p\leq n$ by the same author \cite{Be}.

We summarize here some (new) applications of (${\rm CC_B}$):
\begin{exam}
Robust estimation.  Suppose $y_k$, $k=1,\ldots,p$ are $p$ measurements of  the unknown $x$ with bounded noises, i.e., $\|y_k-x\| \leq \rho$ for some $\rho>0$. Then, a robust recover of $x$ can be obtained by solving ${\rm (CC_B)}$.
\end{exam}

\begin{exam}[\cite{GWSR,GSWR}]\label{ex22}
In non-cooperative wireless network positioning, the region for a target to
communicate with some reference nodes is an intersection of many balls. The position error is usually bounded by the diameter of the smallest ball enclosing this region. This leads to a  direct application of ${\rm (CC_B)}$ with $n=2$ or $3$.
\end{exam}

\begin{exam}
Recently, based on  ${\rm (CC_B)}$ with $p=2$, Bubeck et al. \cite{BLS:Nes} proposed an alternative to Nesterov's accelerated gradient descent method for minimizing a smooth and strongly convex objective function.
An open problem was raised there whether one can develop an efficient algorithm based on both limited-memory BFGS (which maintains a history of the past $p$ iterations) and the corresponding ${\rm (CC_B)}$ (with the same $p$).
\end{exam}

\begin{exam}
The Chebyshev center of the general convex set
\[
\left\{x\in\Bbb R^n: f_i(x)\leq 0, i=1,\ldots,p\right\}
\]
with each $f_i(x)$ being strongly convex with parameter $\mu_i$  can be approximately relaxed to ${\rm (CC_B)}$ with
\[
 \Omega=\left\{x\in\Bbb R^n: f_i(x_0)+\nabla f_i(x_0)^T(x-x_0)+\frac{\mu_i}{2}\|x-x_0\|^2\leq 0 , i=1,\ldots,p \right\},
 \]
 where $x_0\in\Bbb R^n$ is fixed and $\nabla f_i(x_0)$ is the gradient of $f_i(x)$ at $x_0$.
\end{exam}

%The minimax optimization problem (${\rm CC_B}$) (\ref{ccb}) is difficult to solve, even when.
The difficulty of the minimax optimization problem (${\rm CC_B}$) (\ref{ccb}) remains, even when the optimal center $z$ is known in advance.
Actually, for any given $z$, the inner maximization of (\ref{ccb}) in terms of $x$, belongs to the class of  nonconvex quadratic optimization problem with ellipsoid constraints:
\begin{eqnarray*}
({\rm NC{\text-}EQP}) ~~&\max& f(x):= x^TQx+c^Tx\\
 &{\rm s.t.}&~ \|F_ix-g_i\|\leq 1,~i=1,\ldots,p,
\end{eqnarray*}
where $F_i\in \Bbb R^{m\times n}$ and $g_i\in \Bbb R^{m}$. More precisely, the inner maximization of (\ref{ccb}) is a special case of (NC-EQP) with $Q=F_i\equiv I$. It is called uniform quadratic optimization and
denoted by (UQ).  When $F_i\equiv I$, (NC-EQP) is generally NP-hard  but polynomially solvable when $p$ is a fixed constant number \cite{DBAM2013}.

The Lagrangian dual problem of (NC-EQP) can be recast as a semidefinite programming (SDP) relaxation. For a complete procedure, we refer to Section 3.1 of this paper, where the Lagrangian dual problem of (UQ) is reformulated as a semidefinite programming problem. The primal (SDP) relaxation can be derived by directly lifting $xx^T$ to $Y$ satisfying $Y\succeq xx^T$ (see \cite{H15}), and it provides an efficient approach to approximately (and sometimes globally) solve (NC-EQP). For (UQ) (i.e., (NC-EQP) with $Q=F_i\equiv I$),  Beck \cite{Be07} showed that there is no duality gap between (NC-EQP) and (SDP)  when $p\leq n-1$, which is further relaxed to $p\leq n$ in \cite{Be}. Moreover, suppose the origin $0$ is an interior feasible solution and
Slater's condition also holds for the dual (SDP), as shown by Tseng \cite{Ts03}, one can generate a feasible solution $\tilde{x}$ in polynomial time satisfying
\begin{equation}
v({\rm{SDP}})\geq f(\tilde{x})\geq \frac{(1-\gamma)^2}{ (\sqrt{r}+\gamma)^2}\cdot v({\rm{SDP}}),\label{tsb}
\end{equation}
where $r=p$, $\gamma:=\max_{i=1,\ldots,p}\|g_i\|$ and $v(\cdot)$ denotes the optimal value of the problem $(\cdot)$.  Recently, as a further improvement of Tseng's result, Hsia et al. \cite{H15} showed that $r$ in (\ref{tsb}) can be decreased from $p$ to %$\min\left\{T,2\log\left(100Tp\right)\right\}$, where
$\min\left\{n+1,\left\lceil\frac{ \sqrt{8p+17}-3}{2}\right\rceil\right\}$, where $\lceil(\cdot)\rceil$ is the smallest integer greater than or equal to $(\cdot)$.

The  Lagrangian dual was used by Beck \cite{Be07} to efficiently approximate (${\rm CC_B}$). First,
replacing the inner maximization problem (UQ) with its Lagrangian dual yields a quadratic semidefinite programming (QSDP) approximation of (${\rm CC_B}$). Then,  (QSDP) is shown to be equivalently reduced to a convex standard quadratic programming problem (SQP), which is to minimize a quadratic objective function over the unit simplex. To our best knowledge, the quality of  the approximate solution provided by (SQP) remains unknown.

In this paper, we provide an in-depth investigation on (UQ) and (${\rm CC_B}$).
Our main contributions are summarized as follows:
\begin{itemize}
\item (${\rm CC_B}$) is NP-hard. The special case in the plane (i.e., $n=2$) is efficiently and strongly polynomially solvable in $O(p^2)$ time. (Section 2)
\item We propose a new linear programming (LP) relaxation for (UQ), which is equivalent to the (SDP) relaxation under the assumption that strong duality does not hold for (UQ). New sufficient conditions (which generalize the condition $p\le n$) are provided with the help of (LP). (Section 3)
\item An approximation bound for the (LP) relaxation of (UQ) is established, which is tighter than the general ratio (\ref{tsb}) and no longer dependent on $p$. (Section 3)
\item The (LP) relaxation of the inner maximization leads to a new and simple derivation of the standard quadratic programming (SQP) approximation of  (${\rm CC_B}$).   (Section 4)
\item  The quality of the solution returned by (SQP) for approximating (${\rm CC_B}$) is analyzed. More precisely, the first approximation bound of (SQP) is established.  (Section 4)
\item Both (${\rm UQ}$) and (${\rm CC_B}$) can be solved in polynomial time when either $n$ or $p-n(>0)$ is a fixed integer. Moreover, under this assumption, (${\rm UQ}$) could be strongly polynomially solvable.  (Section 5)
\end{itemize}

Throughout the paper, $v(\cdot)$ stands for the optimal value of problem $(\cdot)$.
Denote by $Q\succ(\succeq)0$ that $Q$ is positive (semi)definite.
The inner product of two matrices $A$ and $B$ is denoted by $A\bullet B={\rm Tr}(AB^T)=\sum_{i=1}^n\sum_{j=1}^na_{ij}b_{ij}$.  Denote by rank$(A)$ and $\|A\|$ the rank and spectral norm of $A$, respectively.
For any set $\Omega\subseteq\Bbb R^n$, int$(\Omega)$ denotes the set of all the interior points in $\Omega$.
Denote by ${\rm conv}\{\Omega\}$ the convex hull of $\Omega$.   $\mathcal{N}(\cdot)$ is the null space of $(\cdot)$ and
$\dim(\cdot)$ denotes the dimension of the (sub)space $(\cdot)$.

\section{Complexity of (${\rm CC_B}$)}
In this section, we study the computational complexity of (${\rm CC_B}$). We first show that the planar (${\rm CC_B}$) can be solved in $O(p^2)$. However, in general, (${\rm CC_B}$) is NP-hard.

\subsection{Strongly polynomial solvability in the plane}
\begin{thm}
When $n=2$,  the computational complexity for globally solving
${\rm (CC_B)}$ is at most $O(p^2)$.
\end{thm}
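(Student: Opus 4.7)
The plan is a two-stage computational-geometry procedure. In Stage 1, I would explicitly construct $\Omega$ as a planar convex compact set whose boundary is the cyclic juxtaposition of at most $p$ circular arcs -- one from each circle $\|x-a_{i}\|=r_{i}$ that actually contributes -- joined at at most $p$ vertices, where a vertex is an intersection of two circles $\|x-a_{i}\|=r_{i}$ and $\|x-a_{j}\|=r_{j}$ that also satisfies $\|x-a_{k}\|\le r_{k}$ for every other $k$. Enumerating the $O(p^{2})$ pairwise intersections of the bounding circles, testing each candidate for membership in the remaining $p-2$ disks, and sorting the survivors cyclically on each circle produces this arc-polygon description of $\partial\Omega$ in $O(p^{2})$ time.

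In Stage 2, the aim is to compute the smallest enclosing disk of the arc-polygon $\Omega$. Set $d(z):=\max_{x\in\Omega}\|x-z\|$, which is convex on $\Bbb R^{2}$ as a maximum of convex functions of $z$. Any minimizer $z^{*}$ satisfies $0\in\partial d(z^{*})$, so by a Carath\'eodory argument in the plane there exist at most three \emph{witness} points $x_{1}^{*},\dots,x_{k}^{*}\in\partial\Omega$ with $\|x_{j}^{*}-z^{*}\|=R^{*}:=d(z^{*})$ and $z^{*}\in {\rm conv}\{x_{j}^{*}\}$. Each witness is either a vertex of $\Omega$ or lies in the relative interior of an arc of some circle $\|x-a_{i}\|=r_{i}$; in the latter case outer-normal alignment forces $a_{i},z^{*},x_{j}^{*}$ to be collinear, giving $\|x_{j}^{*}-z^{*}\|=\|z^{*}-a_{i}\|+r_{i}$ in closed form. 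The $k=2$ case places $z^{*}$ at the midpoint of a diametral pair, while $k=3$ makes $z^{*}$ the circumcenter of the three witnesses.

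I would organize the search via the farthest-point Voronoi diagram (FPVD) of $\partial\Omega$. Its $O(p)$ open cells are indexed by arcs and vertices of $\partial\Omega$; inside each cell $d(z)$ equals either $\|z-a_{i}\|+r_{i}$ or $\|z-v\|$ and is smooth with non-vanishing gradient, so the minimum of $d$ must lie on an FPVD edge (the $k=2$ case) or at an FPVD vertex (the $k=3$ case), yielding only $O(p)$ candidate centers in closed form. Each candidate $(z,R)$ is then verified in $O(p)$ time by sweeping the arcs of $\partial\Omega$ precomputed in Stage 1 and comparing $R$ against the distance from $z$ to the natural farthest point on each arc and to each vertex, for a total of $O(p^{2})$ work, dominated by the preprocessing. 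The main obstacle is the FPVD step: one has to verify that the FPVD of an arc-polygon has $O(p)$ complexity and that the minimum of $d$ lies on its one-skeleton, both of which are standard generalizations of the classical facts for polygonal FPVDs; the FPVD itself can then be constructed in $O(p\log p)$ time once the arc-polygon is known, well within the $O(p^{2})$ target.
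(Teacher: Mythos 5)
Your overall strategy (first build the arc--polygon description of $\partial\Omega$, then find its smallest enclosing disk) starts out like the paper's, but your second stage diverges and contains a genuine gap. The critical claim is that inside each cell of the farthest-point Voronoi diagram $d(z)$ is smooth with non-vanishing gradient, so the minimizer must lie on the one-skeleton, leaving only $O(p)$ candidates. In an arc cell $d(z)=\|z-a_i\|+r_i$, whose gradient vanishes (is undefined) at $z=a_i$; and $z^{*}=a_i$ genuinely occurs, namely whenever circle $i$ contributes an arc of angular measure at least $\pi$, in which case the smallest enclosing disk is the $i$-th ball itself. Your FPVD search would miss this optimum. The paper isolates exactly this situation as the ``major arc'' case and dispatches it before doing anything else; you would need to add the centers $a_1,\ldots,a_p$ to your candidate list (or an equivalent test) to close the hole.

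Two further problems. First, Stage 1 as described costs $O(p^{2})\cdot O(p)=O(p^{3})$, not $O(p^{2})$: you test each of the $O(p^{2})$ pairwise circle intersections against $p-2$ disks. (Also, for disks of unequal radii a single circle can contribute more than one boundary arc, so ``at most $p$ arcs, one per circle'' is false; the total boundary complexity is still $O(p)$, but that itself needs an argument, e.g.\ via the lower envelope of the support functions.) Second, the linear size and $O(p\log p)$ construction of a farthest-point Voronoi diagram whose sites mix ordinary points (the vertices) with additively weighted points (the arcs, whose distance formula $\|z-a_i\|+r_i$ is only valid where the antipode of $z$ through $a_i$ lands on the arc) is asserted as ``standard'' but is the real technical content of your plan, and you flag it yourself as the main obstacle. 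The paper avoids all of this machinery: after the major-arc test it observes that any disk of radius at most $r_i$ containing the two endpoints of a minor arc of circle $i$ automatically contains the whole circular segment, so the problem collapses to the smallest disk enclosing the finite set of $O(p^{2})$ arc endpoints, which Welzl's (or Megiddo's) linear-time algorithm handles in $O(p^{2})$ total. That reduction is simpler, works with the $O(p^{2})$-size boundary description directly, and needs no Voronoi structure.
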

\begin{proof}
We first characterize the  boundary of $\Omega$, where at least one of the $p$ constraints in  $\Omega$ is active. For each $i=1,\ldots,p$, the $i$-th part of the boundary of $\Omega$ is  the following circular arc(s)
\begin{eqnarray}
{\rm arc}_i&:=&\{x\in \Bbb R^2: \|x-a_{i}\|= r_{i},~\|x-a_{j}\|\leq r_{j},~ j\neq i\}\nonumber \\
&=&\cap_{j\neq i}\{x\in \Bbb R^2:  \|x-a_{i}\|= r_{i},~\|x-a_{j}\|\leq r_{j} \}\nonumber \\
&=&\cap_{j\neq i}\{x\in \Bbb R^2:  \|x-a_{i}\|= r_{i},~2(a_i-a_j)^Tx\leq \|a_i\|^2-\|a_j\|^2+r_j^2-r_i^2\}.\nonumber
\end{eqnarray}
The last equality implies that  ${\rm arc}_i$ is the intersection of $p-1$ circular arcs, each of which is obtained by solving a system consisting of one quadratic equation and one linear inequality. 
Therefore, ${\rm arc}_i$ is the union of at most $p-1$ arcs (denoted by ${\rm arc}_{i,j}~ (j=1,2,\cdots,p-1)$) and hence
determined by at most $p-1$ pairs of ordered endpoints $\{A_{i,2j-1},A_{i,2j}\}$ $(j=1,2,\cdots,p-1)$ on the circumference of the $i$-th circle. Each endpoint $A_{i,k}$ ($k\in\{1,2,\cdots,2p-2\}$) corresponds to a closed-form solution in terms of $a_i$ and $r_i$ ($i=1,2,\cdots,p$).
The complexity of finding out all the pairs of ordered endpoints $\{A_{i,2j-1},A_{i,2j}\}$  $(i=1,2,\cdots,p; j=1,2,\cdots,p-1)$  is $O(p^2)$.

Every pair of distinct points on a circle determines two arcs. We call the longer one the major arc and the other the minor arc. In the equal case, both are called major arcs.
%We define the major arc is the longer arc linking the given two points on a circle, and the minor arc is the shortest.
If there is an index $i$ such that $a_i\in \cup_{j=1}^{p-1}{\rm conv}\{{\rm arc}_{i,j}\}$ (i.e., ${\rm arc}_{i,j}$ is a major arc for some index $j$), the Chebyshev center and radius of $\Omega$ are $a_i$ and $r_i$, respectively, that is, the $i$-th circle is the smallest circle cover. Otherwise, all ${\rm arc}_{i,j}$ are minor arcs and the  Chebyshev radius of $\Omega$ is smaller than or equal to $\min_{i=1,\cdots,p}\{r_i\}$.
%Notice that ${\rm arc}_i$ is uniquely determined by  two ordered endpoints $A_{2i-1},A_{2i}$ on the circumference of $i$-th circle.
Then, we have
\[
\Omega=\cup_{i=1,2,\cdots,p;j=1,2,\cdots,p-1} {\rm conv}\{{\rm arc}_{i,j}\}\cup{\rm conv}\{A\},
\]
where $A:=\{A_{i,k}:~i=1,2,\cdots,p; k=1,2,\cdots,2p-2\}$.

One can show that any circle with a radius less than or equal to $r_i$ covering the chord  $[A_{i,2j-1},A_{i,2j}]$ also covers the convex hull of the minor arc ${\rm arc}_{i,j}$. Otherwise, the radius of the circle will be greater than $r_i$.
Therefore, ${\rm (CC_B)}$ reduces to find the smallest ball enclosing either ${\rm conv}\{A\}$ or just the finite set $A$. Applying Welzl's algorithm \cite{W91}, one can find the smallest ball enclosing $p(p-1)$ points in $O(p^2)$ time. Notice that all the involved calculations are exact.
As a conclusion, the worst case time complexity of ${\rm (CC_B)}$ is $O(p^2)$.
\end{proof}

It is unknown whether the above algorithm for solving the planar ${\rm (CC_B)}$ can be extended to solve the general ${\rm (CC_B)}$ of dimension $n\ge 3$.

\subsection{NP-hardness}
We show that the general (${\rm CC_B}$) is NP-hard, especially when $p\ge 2n+2$.  Consider the partitioning problem (PP), which asks whether the linear equation
\[
a^Tx=0,~x\in \{-1,1\}^n
\]
has a solution for any given integer vector $a$. It is well known that (PP) is NP-hard \cite{GJ}.

Let us construct the following quadratic programming problem with ball constraints:
\begin{eqnarray}
({\rm P_0})~~&\max & x^Tx \label{p0:0}\\
&{\rm s.t.}& x^Tx+x_i\leq 1+n,~i=1,\ldots n, \label{p0:1}\\
&& x^Tx-x_i\leq 1+n,~i=1,\ldots n,\label{p0:2}\\
&& x^Tx-a^Tx\leq n,\label{p0:3}\\
&& x^Tx+a^Tx\leq n.\label{p0:4}
\end{eqnarray}
It is trivial to see that $v({\rm P_0})\leq n$. Moreover,  we have the following result.
\begin{lem}\label{CP}
$v({\rm P_0})= n$ if and only if the solution of ${\rm(PP)}$ exists.
\end{lem}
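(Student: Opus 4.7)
The plan is to prove both implications directly by exhibiting the correspondence between feasible points of ${\rm (P_0)}$ attaining the value $n$ and solutions of ${\rm (PP)}$.

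For the ``if'' direction, assume ${\rm (PP)}$ admits a solution $\hat x\in\{-1,1\}^n$ with $a^T\hat x=0$. I would plug $\hat x$ into each constraint of ${\rm (P_0)}$ and check feasibility termwise. Since $\hat x^T\hat x=n$ and $\hat x_i\in\{-1,1\}$, the first two families reduce to $n\pm\hat x_i\le n+1$, which holds; the last two reduce to $n\pm a^T\hat x=n\le n$, which also holds. Thus $\hat x$ is feasible and attains objective value $n$, which together with the a~priori bound $v({\rm P_0})\le n$ noted in the text gives $v({\rm P_0})=n$.

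For the ``only if'' direction, assume $v({\rm P_0})=n$ and let $x^\ast$ be an optimal solution, so $x^{\ast T}x^\ast=n$. The strategy is to substitute this value of the objective into every constraint and read off box and equality information. The first family yields $n+x_i^\ast\le n+1$, i.e.\ $x_i^\ast\le 1$ for every $i$; the second yields $x_i^\ast\ge -1$; together these give $\|x^\ast\|_\infty\le 1$. The last two constraints give $a^Tx^\ast\ge 0$ and $a^Tx^\ast\le 0$, hence $a^Tx^\ast=0$. Finally, combining $\sum_{i=1}^n (x_i^\ast)^2=n$ with $(x_i^\ast)^2\le 1$ for every $i$ forces $(x_i^\ast)^2=1$ for every $i$, so $x^\ast\in\{-1,1\}^n$ and solves ${\rm (PP)}$.

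There is no genuine obstacle; the argument is essentially an inspection of the constraints after substituting $x^Tx=n$. The only point that warrants care is recognizing that each listed inequality is genuinely a ball constraint of the form in $\Omega$ (e.g.\ $x^Tx+x_i\le n+1$ rewrites as $\|x+\tfrac12 e_i\|^2\le n+\tfrac54$), so ${\rm (P_0)}$ is a legitimate instance that ${\rm (CC_B)}$ may reduce to in the subsequent NP-hardness argument.
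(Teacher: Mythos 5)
Your proof is correct and follows essentially the same route as the paper's: verify feasibility and value $n$ for a partition vector in one direction, and in the other extract $|x_i^\ast|\le 1$ and $a^Tx^\ast=0$ from the constraints and then use $\sum_i (x_i^\ast)^2=n$ to force $x_i^\ast\in\{-1,1\}$ (a step the paper leaves implicit but you spell out). No issues.
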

\begin{proof}
Let $x$ be any feasible solution of $({\rm P_0})$. It follows from (\ref{p0:3})-(\ref{p0:4}) that $x^{T}x\le n$.
Suppose ${\rm(PP)}$ has a solution, denoted by $\widetilde{x}\in\{-1,1\}^n$, then $\widetilde{x}$ is a feasible solution of (\ref{p0:0})-(\ref{p0:4}) and $\widetilde{x}^T\widetilde{x}=n$. It follows that  $v({\rm P_0})=n$.

On the contrary, suppose $v({\rm P_0})=n$ with the optimal solution $x^*$, we have
  $x^{*T}x^*= n$. Then, it follows from (\ref{p0:1})-(\ref{p0:2}) that $-1\le x^*_i\le 1$ for $i=1,\ldots,n$, and therefore $x^*_i\in \{-1,1\}$. According to (\ref{p0:3})-(\ref{p0:4}), we obtain $a^Tx^*=0$. Therefore, $x^*$ is a solution of (PP).
The  proof is complete.
\end{proof}

\begin{thm}
The problem ${\rm (CC_B})$ is NP-hard.
\end{thm}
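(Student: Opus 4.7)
The plan is to reduce the partitioning problem (PP) to $({\rm CC_B})$ by exhibiting the quadratic program $({\rm P_0})$ of Lemma \ref{CP} as the inner maximization of a carefully chosen $({\rm CC_B})$ instance evaluated at $z=0$. First I would verify that each of the $2n+2$ constraints in $({\rm P_0})$ is a ball constraint: completing the square turns $x^Tx+x_i\le 1+n$ into $\|x-(-\tfrac12 e_i)\|^2\le n+\tfrac54$, and the remaining three families transform in exactly the same manner. Thus the feasible set $\Omega$ of $({\rm P_0})$ is precisely an intersection of $p=2n+2$ Euclidean balls, and $v({\rm P_0})=\max_{x\in\Omega}\|x\|^2$.

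The crucial step is to argue that $z=0$ is an optimal Chebyshev center for this $\Omega$, so that $v({\rm CC_B})=\max_{x\in\Omega}\|x\|^2=v({\rm P_0})$. This relies on two observations. First, $\Omega$ is centrally symmetric, because $x\mapsto -x$ swaps the two families of constraints involving $\pm x_i$ and the pair involving $\pm a^Tx$, giving $-\Omega=\Omega$. Second, the outer objective $\varphi(z):=\max_{x\in\Omega}\|x-z\|^2$ is convex in $z$ as a pointwise supremum of convex functions. Any optimal center $z^*$ then yields a second optimal center $-z^*$ by symmetry, and convexity of $\varphi$ forces $0=\tfrac12(z^*+(-z^*))$ to also be optimal.

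Combining the previous two steps with Lemma \ref{CP} gives $v({\rm CC_B})=n$ if and only if (PP) admits a solution, so any polynomial-time algorithm for $({\rm CC_B})$ would decide (PP) in polynomial time. The NP-hardness of (PP) therefore transfers to $({\rm CC_B})$, and since the reduction uses exactly $p=2n+2$ balls, it also matches the hardness regime $p\ge 2n+2$ highlighted in the theorem. The main obstacle is precisely the identification of the Chebyshev center at $0$; without it one would obtain only a one-sided inequality between $v({\rm CC_B})$ and $v({\rm P_0})$, which is insufficient to transfer hardness. The rest is routine bookkeeping on the reduction.
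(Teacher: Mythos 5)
Your reduction is correct and is essentially the paper's own: the same gadget $({\rm P_0})$, the same use of Lemma \ref{CP}, and the same exploitation of the central symmetry $-\Omega_0=\Omega_0$ to pin the Chebyshev center at the origin. The only (harmless) difference is how you certify $z=0$: you invoke convexity of $\varphi(z)=\max_{x\in\Omega_0}\|x-z\|^2$ together with evenness (which in fact gives $\varphi(0)\le\tfrac12(\varphi(z)+\varphi(-z))=\varphi(z)$ for every $z$, so you do not even need an optimizer to exist), whereas the paper argues directly that $\max_{x\in\Omega_0}\|x-z\|^2\ge \max_{x\in\Omega_0}x^Tx+\|z\|^2$ with equality iff $z=0$.
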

\begin{proof}
The problem ${\rm(P_0)}$ can be reformulated as a special case of (${\rm CC_B}$):
\begin{equation}
\min_{z}\max_{x\in \Omega_0}\|x-z\|^2,\label{zmax}
\end{equation}
where $\Omega_0$ denotes the feasible region of ${\rm(P_0)}$, i.e., the set of all solutions satisfying  (\ref{p0:1})-(\ref{p0:4}).
Notice that for any $x\in \Omega_0$, we always have $-x\in \Omega_0$. Therefore, for any $z\in\Bbb R^n$, it holds that
\begin{eqnarray*}
\max_{x\in \Omega_0}\|x-z\|^2&=&\max_{x\in \Omega_0}\max\{\|x-z\|^2,\|-x-z\|^2\}\\
&=&\max_{x\in \Omega_0}\{x^Tx+z^Tz+\max\{-2x^Tz,2x^Tz\}\}\\
&\geq&\max_{x\in \Omega_0}x^Tx+z^Tz\\
&\geq&\max_{x\in \Omega_0}x^Tx.
\end{eqnarray*}
It is trivial to verify that the above two inequalities hold as equalities if and only if $z=0$.
Consequently, we have
\begin{equation}
\min_{z}\max_{x\in \Omega_0}\|x-z\|^2=\max_{x\in \Omega_0}x^Tx=v({\rm P_0}).\label{z0}
\end{equation}
According to (\ref{z0}) and Lemma \ref{CP}, the NP-hard problem (PP) can be solved via (\ref{zmax}), a special case of (${\rm CC_B}$). The proof is complete.
\end{proof}

\section{The inner maximization: uniform quadratic optimization}\label{Sec:LP}
We study in this section the inner maximization problem of ($\rm CC_B$).
It is a uniform quadratic optimization (UQ), which is generally reformulated  as follows:
\begin{eqnarray*}
{\rm (UQ)}~~&\max& f_0(x)= x^Tx-2a_0^Tx\\
&{\rm s.t.}& x\in \Omega_1:=\left\{x\in\Bbb R^n:
f_i(x)=x^Tx-2a_i^Tx+b_i\leq 0,~i=1,\cdots,p
\right\}.
\end{eqnarray*}
(UQ) is difficult to solve. It is already NP-hard as it contains $({\rm P_0})$ (\ref{p0:0})-(\ref{p0:4}) as a special case.
Throughout this section, we assume that the Slater's condition holds for (UQ), i.e., int$(\Omega_1)\neq\emptyset$. Without loss of generality, using a vector translation if necessary, we can make the following assumption.
\begin{ass}\label{ass2}
Suppose $0\in {\rm int}(\Omega_1)$, or equivalently, $b_i<0$ for $i=1,\cdots,p$.
\end{ass}

\subsection{${\rm SDP}$ relaxations}
Introducing $p$ Lagrangian multipliers $\lambda_1\ge0,\cdots,\lambda_p\ge0$ yields the Lagrangian function of (UQ):
\begin{eqnarray*}
L(x,\lambda)&=&f_0(x)-\sum_{i=1}^p\lambda_if_i(x)\\
&=&
\left(1-\sum_{i=1}^p \lambda_{i}\right)x^Tx+
2\left(\sum_{i=1}^p \lambda_{i} a_{i}-a_0\right)^Tx-\sum_{i=1}^p\lambda_{i}b_i.
\end{eqnarray*}

The Lagrangian dual problem of (UQ) reads as
\begin{eqnarray*}
{\rm (D)}~~\inf_{\lambda\geq 0}~\left\{d(\lambda):=\sup_{x\in \Bbb R^n}L(x,\lambda)\right\}.
\end{eqnarray*}
By using Shor's relaxation scheme \cite{NS}, the dual problem (D) can be reformulated as the dual semidefinite programming ${\rm (D\text{-}SDP)}$ relaxation for (UQ):
\begin{eqnarray*}
{\rm (D\text{-}SDP)}~~& \inf  & \tau\\
&{\rm s.t.}&
\left(\begin{array}{cc}(\sum_{i=1}^p\lambda_{i}-1)I&a_0-\sum_{i=1}^p\lambda_{i}a_{i}\\a_0^T-\sum_{i=1}^p\lambda_{i}a_{i}^T&\tau+ \sum_{i=1}^p\lambda_{i}b_{i}
\end{array}\right)\succeq 0,\\
&&\tau\in \Bbb R,~\lambda_i\geq 0,~i=1,\ldots,p.
\end{eqnarray*}

It is not difficult to  verify that  the conic dual problem of (D-SDP) leads to the following primal SDP:
\begin{eqnarray}
{\rm (SDP)}~~&\max &  \left(\begin{array}{cc}I&-a_{0}\\-a_{0}^T&0
\end{array}\right)\bullet \left(\begin{array}{cc}Y&x\\x^T&1
\end{array}\right) \label{SDP:0}\\
&{\rm s.t.}& \left(\begin{array}{cc}I&-a_{i}\\-a_{i}^T&b_{i}
\end{array}\right)\bullet \left(\begin{array}{cc}Y&x\\x^T&1
\end{array}\right) \leq 0,~i=1,\ldots,p,\label{SDP:1}\\
&& \left(\begin{array}{cc}Y&x\\x^T&1
\end{array}\right)\succeq 0,\label{SDP:2} %,~U_{n+1,n+1}=1,
\end{eqnarray}
which can be more directly obtained by
lifting  $xx^T$ to $Y$ satisfying $Y\succeq xx^T$ (see \cite{Be07}). It is trivial to see that $(x,Y)=(0,\epsilon I)$ with $0<\epsilon <\frac{\min\{-b_i\}}{n}$ is an interior feasible point of (${\rm SDP}$). And, $(\lambda,\tau)$, with any fixed $\lambda$ satisfying $\lambda_i>0$, $\sum_{i=1}^p\lambda_i>1$ and sufficiently large $\tau$, is an interior feasible point of (${\rm D\text{-}SDP}$). That is, Slater's condition holds for both (${\rm SDP}$) and (${\rm D\text{-}SDP}$). Consequently,
strong duality holds for (D-SDP) and (SDP) \cite{LS}, i.e.,
\[
v{\rm (SDP)}=v{\rm (D\text{-}SDP)}=v{\rm (D)}
\]
and both optimal values are attained.

For the tightness of the SDP relaxation,
Beck \cite{Be07} established  the following sufficient  condition to guarantee that $v({\rm UQ})=v({\rm SDP})$.
\begin{thm}[\cite{Be07}]
$v({\rm SDP})=v({\rm UQ})$ as long as $p\leq n-1$.
\end{thm}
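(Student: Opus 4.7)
The plan is to produce, from any optimal solution of $({\rm SDP})$, a feasible point $\tilde x$ of $({\rm UQ})$ achieving the same objective value; weak duality $v({\rm UQ})\le v({\rm SDP})$ then upgrades to equality. Invoking the strong SDP duality established just above, I would let $(x^{*},Y^{*})$ be optimal for $({\rm SDP})$ and decompose $Y^{*}=x^{*}(x^{*})^{T}+\Delta$ with $\Delta\succeq 0$. Then $v({\rm SDP})=\|x^{*}\|^{2}+{\rm Tr}(\Delta)-2a_{0}^{T}x^{*}$, and SDP feasibility gives $\|x^{*}\|^{2}+{\rm Tr}(\Delta)-2a_{i}^{T}x^{*}+b_{i}\le 0$ for every $i=1,\ldots,p$. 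The case $\Delta=0$ is trivial since $x^{*}\in\Omega_{1}$ itself attains $v({\rm SDP})$, so I may assume ${\rm Tr}(\Delta)>0$.

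Writing $\tilde x=x^{*}+d$ and expanding, one checks that $\tilde x\in\Omega_{1}$ as soon as (i) $\|d\|^{2}+2(x^{*})^{T}d={\rm Tr}(\Delta)$ and (ii) $a_{i}^{T}d\ge 0$ for $i=1,\ldots,p$, because SDP feasibility then reduces the $i$-th $({\rm UQ})$ constraint to $-2a_{i}^{T}d\le 0$. Moreover $f_{0}(\tilde x)=v({\rm SDP})-2a_{0}^{T}d$, so adding the extra condition (iii) $a_{0}^{T}d\le 0$ forces $f_{0}(\tilde x)\ge v({\rm SDP})$, which combined with weak duality yields equality and $a_{0}^{T}d=0$ automatically. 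The theorem therefore reduces to exhibiting a nonzero $d$ satisfying the single quadratic identity (i) together with the $p+1$ homogeneous linear inequalities in (ii)--(iii).

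For the linear piece I view $-a_{0},a_{1},\ldots,a_{p}$ as $p+1\le n$ vectors in $\Bbb R^{n}$. Since strictly fewer than $n+1$ vectors in $\Bbb R^{n}$ can never have the origin in the interior of their convex hull, their dual cone contains a nonzero direction $d^{0}$. Concretely, if ${\rm span}\{a_{0},a_{1},\ldots,a_{p}\}$ is a proper subspace I pick any $d^{0}\ne 0$ in its orthogonal complement (satisfying every inequality with equality); otherwise $p+1=n$ and the vectors form a basis, in which case the sum of the associated dual basis meets every inequality strictly. I then rescale by $t>0$: the homogeneous conditions (ii)--(iii) persist for $d=t d^{0}$, while (i) becomes the scalar equation $t^{2}\|d^{0}\|^{2}+2t(x^{*})^{T}d^{0}={\rm Tr}(\Delta)$, whose left-hand side is continuous in $t$, equals $0$ at $t=0$, and tends to $+\infty$, so it attains ${\rm Tr}(\Delta)>0$ at some $t>0$ by the intermediate value theorem.

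The delicate step is the dimension argument producing $d^{0}$: the threshold $p=n-1$ with $\{a_{0},\ldots,a_{p}\}$ linearly independent is exactly where the orthogonal-complement shortcut breaks, and one must appeal to the combinatorial fact above to keep the dual cone nontrivial. Everything else -- the rescaling, the verification of $({\rm UQ})$ feasibility, and the matching of objective values -- is a mechanical consequence of SDP feasibility of $(x^{*},Y^{*})$.
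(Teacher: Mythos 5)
Your argument is correct, and it proves the stated $p\le n-1$ threshold by a route that is genuinely different from the one this paper takes. The paper itself never re-proves Theorem 2 directly: it cites Beck and then derives the stronger $p\le n$ statement (Theorem 3) as a corollary of Theorem 4, whose proof runs entirely through the linear programming relaxation --- Lemma 2 (boundedness of $v({\rm LP})$ iff $a_0\in{\rm conv}\{a_1,\ldots,a_p\}$), Theorem 5 ($v({\rm SDP})>v({\rm UQ})$ forces $v({\rm SDP})=v({\rm LP})<+\infty$), and the case analysis of Theorem 6 applied to a perturbed LP optimum. You instead perform a direct rank-reduction on the SDP optimum: writing $Y^*=x^*x^{*T}+\Delta$ with $\Delta\succeq 0$ (valid by the Schur complement of the constraint $\bigl(\begin{smallmatrix}Y&x\\x^T&1\end{smallmatrix}\bigr)\succeq 0$), you shift $x^*$ along a nonzero $d$ in the dual cone of $\{-a_0,a_1,\ldots,a_p\}$, rescaled via the intermediate value theorem so that $\|d\|^2+2x^{*T}d={\rm Tr}(\Delta)$; all the feasibility and objective computations you sketch check out, and weak duality closes the gap. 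The trade-off is instructive. Your proof is elementary, constructive and self-contained, and it makes completely transparent where $p\le n-1$ enters: $p+1\le n$ vectors cannot positively span $\Bbb R^n$, so the dual cone is nontrivial (your two subcases, orthogonal complement versus sum of the dual basis, handle this correctly). But it also shows why this route stalls at $p\le n-1$: when $p=n$ the $n+1$ vectors $-a_0,a_1,\ldots,a_p$ may positively span $\Bbb R^n$ and your $d^0$ need not exist, and one cannot simply swap in the $p$ inequalities $(a_i-a_0)^Td\ge 0$ because then feasibility and objective improvement are no longer simultaneously guaranteed. The paper's LP machinery is precisely what gets around this: it works only with the $p$ difference vectors $a_i-a_0$ and disposes of the case $a_0\notin{\rm conv}\{a_1,\ldots,a_p\}$ separately via unboundedness of $({\rm LP})$, which is how it reaches $p\le n$ and the still more general conditions of Theorem 4.
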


Two years later, Beck \cite{Be} further relaxed the above sufficient condition to the following one:
\begin{thm}[\cite{Be}]\label{Beck}
$v({\rm SDP})=v({\rm UQ})$ as long as $p\leq n$.
\end{thm}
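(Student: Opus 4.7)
My plan is to work with the SDP in a reduced convex form. Because the SDP objective and every SDP constraint depend on $Y$ only through ${\rm tr}(Y)$, and, with $t={\rm tr}(Y)$ prescribed, a $Y\succeq xx^T$ exists iff $t\ge\|x\|^2$ (take e.g.\ $Y=xx^T+\tfrac{t-\|x\|^2}{n}I$), ${\rm (SDP)}$ is equivalent to the convex program
\begin{equation*}
\max_{(x,t)}\{\, t-2a_0^Tx :\; t-2a_i^Tx+b_i\le 0,\ i=1,\ldots,p,\ t\ge\|x\|^2\,\}.
\end{equation*}
Let $(x^*,t^*)$ be an optimal solution. If $t^*=\|x^*\|^2$, then $x^*\in\Omega_1$ and $f_0(x^*)=v({\rm SDP})$, giving $v({\rm UQ})=v({\rm SDP})$ at once; the interesting case is $t^*>\|x^*\|^2$.

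In that case the constraint $t\ge\|x\|^2$ is inactive, so KKT applied to the linearly constrained problem produces multipliers $\lambda_i\ge 0$ satisfying $\sum_i\lambda_i=1$, $\sum_i\lambda_i a_i=a_0$, with complementary slackness $\lambda_i>0\Rightarrow t^*=2a_i^Tx^*-b_i$. A short calculation using these identities yields $v({\rm SDP})=-\sum_i\lambda_ib_i$, and more generally $f_0(x)=\sum_i\lambda_if_i(x)-\sum_i\lambda_ib_i$ for every $x\in\Bbb R^n$. Consequently, strong duality reduces to producing $\tilde x\in\Omega_1$ with $f_i(\tilde x)=0$ for each $i\in I^+:=\{i:\lambda_i>0\}$.

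To find such $\tilde x$ I would work on the affine subspace $A=\{x:2(a_{i_0}-a_i)^Tx=b_{i_0}-b_i,\ i\in I^+\}$, where $i_0$ is any fixed element of $I^+$. The SDP-active equalities at $x^*$ force $x^*\in A$, and on $A$ every $f_i$ with $i\in I^+$ coincides with $f_{i_0}$. The crucial dimension count is $\dim A\ge n-(|I^+|-1)\ge n-(p-1)\ge 1$, valid precisely under the hypothesis $p\le n$. Since $f_{i_0}|_A$ is a convex quadratic with $f_{i_0}(x^*)=\|x^*\|^2-t^*<0$ that grows unboundedly along $A$, the intermediate value theorem produces points in $A$ at which $f_{i_0}$, and hence every $f_i$ for $i\in I^+$, vanishes.

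The main obstacle will be selecting one such point that also satisfies the remaining inequalities $f_j\le 0$ for $j\notin I^+$. I would address this by moving from $x^*$ along $A$ until first meeting $\partial\Omega_1$: if the first boundary encounter is with some $i\in I^+$, the contact point is the desired $\tilde x$; if instead some $j\notin I^+$ saturates first, adjoin $j$ to the active set and iterate with a strictly smaller but still positive-dimensional affine flat, which remains possible because $p\le n$. The process terminates with a $\tilde x\in\Omega_1$ attaining $f_0(\tilde x)=v({\rm SDP})$, and combining with the easy inequality $v({\rm UQ})\le v({\rm SDP})$ completes the proof.
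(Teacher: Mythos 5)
Your reduction of {\rm (SDP)} to the convex program in $(x,t)$, the treatment of the easy case $t^*=\|x^*\|^2$, the KKT identities $\sum_i\lambda_i=1$, $\sum_i\lambda_ia_i=a_0$, and the reformulation of the goal as ``find $\tilde x\in\Omega_1$ with $f_i(\tilde x)=0$ for $i\in I^+$'' are all correct, and this target is exactly the one the paper reaches by a different route (its Theorem~\ref{thm:sdp2}(b) together with condition (ii) of Theorem~\ref{thm:sdp0} and Proposition~1). However, the final step has a genuine gap, and it is not merely a matter of detail. The flat $A$ you work on is too large: a direction $d$ in the lineality space of $A$ only satisfies $(a_{i_0}-a_i)^Td=0$ for $i\in I^+$, so it controls the active constraints but says nothing about $f_j$ for $j\notin I^+$; when $\dim A$ is small (e.g.\ $\dim A=1$ with $|I^+|<p$) both roots of $f_{i_0}$ on the line may violate some inactive constraint. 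Your proposed repair --- ``adjoin $j$ to the active set and iterate on a smaller flat'' --- is not well defined: at the first contact point $x_1$ with $\partial\{f_j\le 0\}$ you have $f_j(x_1)=0>f_{i_0}(x_1)$, so $x_1$ does \emph{not} lie on the refined flat $\{f_j=f_{i_0}\}\cap A$, there is no argument that the refined flat even meets $\Omega_1$, and no termination or correctness argument is given. This missing step is essentially the whole content of the theorem.

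The fix is to use a smaller subspace, and this is where the hypothesis $p\le n$ must actually be spent. Since $a_0=\sum_{i\in I^+}\lambda_ia_i$ with $\sum\lambda_i=1$, the vectors $a_1-a_0,\ldots,a_p-a_0$ span a subspace of dimension at most $p-1\le n-1$, so there is $v\neq 0$ with $(a_j-a_0)^Tv=0$, i.e.\ $a_j^Tv$ equal for \emph{all} $j=1,\ldots,p$. Along $x^*+sv$ every $f_j$ ($j=0,1,\ldots,p$) changes by the \emph{same} quantity $\delta(s)=s^2\|v\|^2+2s(x^*-a_1)^Tv$, while the LP constraints at $x^*$ give $f_j(x^*)\le \|x^*\|^2-t^*=f_{i_0}(x^*)<0$ for every $j$, with equality for $j\in I^+$. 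Since $\delta(0)=0$ and $\delta(s)\to+\infty$, the intermediate value theorem yields $\bar s$ with $\delta(\bar s)=t^*-\|x^*\|^2$; at $\tilde x=x^*+\bar sv$ all constraints remain satisfied, $f_i(\tilde x)=0$ for $i\in I^+$, and your identity $f_0=\sum_i\lambda_if_i-\sum_i\lambda_ib_i$ gives $f_0(\tilde x)=v({\rm SDP})$. This is, in substance, the paper's argument: it phrases the same mechanism through the (LP) relaxation (which is your reduced problem without the conic constraint), its Lemma~\ref{lem:1}, and the recession-direction condition (ii) of Theorem~\ref{thm:sdp0}, rather than through KKT of the SOCP.
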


Before ending this section, we propose a more general sufficient condition to guarantee the strong duality of (${\rm UQ }$) and leave the proof for the next subsection.

\begin{thm}\label{thm:sdp0}
$v({\rm SDP})=v({\rm UQ})$ under one of the following conditions:
\begin{itemize}
\item[(i)] $a_0\not\in$ {\rm conv}$\left\{a_1,\ldots,a_p\right\}$;\\
\item[(ii)]% $a_0\in$ {\rm conv}$\left\{a_1,\ldots,a_p\right\}$ and
$\left\{x\in\Bbb R^n: (a_i-a_0)^Tx\geq 0,i=1,\cdots,p\right\}\neq\left\{0\right\}$.
\end{itemize}
\end{thm}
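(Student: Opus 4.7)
The plan is to invoke the LP relaxation to be developed in the next subsection. First I would derive the relaxation
\begin{eqnarray*}
{\rm (LP)}~~&\max_{x,t}& t - 2a_0^T x \\
&{\rm s.t.}& t - 2a_i^T x + b_i \leq 0,~i=1,\ldots,p,
\end{eqnarray*}
obtained from (SDP) by substituting $t={\rm tr}(Y)$ and dropping the semidefinite constraint $Y\succeq xx^T$. Its LP dual is
\begin{eqnarray*}
{\rm (D\text{-}LP)}~~&\min_\lambda& -\sum_{i=1}^p \lambda_i b_i \\
&{\rm s.t.}& \sum_{i=1}^p \lambda_i a_i = a_0,~\sum_{i=1}^p \lambda_i = 1,~\lambda \geq 0,
\end{eqnarray*}
and the chain $v({\rm UQ}) \leq v({\rm SDP}) \leq v({\rm LP})$ is automatic.

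The workhorse is the lemma announced in the abstract and to be proved in the same subsection: whenever strong duality fails for (UQ), i.e.\ $v({\rm UQ}) < v({\rm SDP})$, the two relaxations coincide, $v({\rm LP}) = v({\rm SDP})$. Contrapositively, a strict inequality $v({\rm LP}) > v({\rm SDP})$ forces $v({\rm UQ}) = v({\rm SDP})$.

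Case (i) falls out immediately. If $a_0 \notin {\rm conv}\{a_1,\ldots,a_p\}$ then (D-LP) is infeasible, while (LP) is trivially primal-feasible (take $x=0$ and $t$ sufficiently negative), so by LP duality $v({\rm LP}) = +\infty$. Slater's condition for (D-SDP) has already been verified in the paper, hence $v({\rm SDP})$ is finite, giving $v({\rm LP}) > v({\rm SDP})$, and the contrapositive of the lemma delivers the claim.

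Case (ii) is where I expect the real difficulty. If the nonzero $\bar x$ with $(a_i - a_0)^T \bar x \geq 0$ actually renders (LP) unbounded (as happens, for instance, whenever enough of these inequalities are strict so that the recession direction is strictly improving for the LP objective), then the Case~(i) argument applies verbatim. Otherwise (LP) has a finite optimum that may equal $v({\rm SDP})$, and the contrapositive alone is not enough. I would then use $\bar x$ as a rank-reducing direction inside (SDP): take an optimal $(x^*,Y^*)$, set $M := Y^* - x^{*}x^{*T}\succeq 0$, spectrally decompose $M$, and perturb $x^* \mapsto x^* \pm s\,\bar x$ with a compensating decrease in $Y^*$ designed so that (a) the two perturbed pairs average back to $(x^*,Y^*)$ in objective value and in every active ball constraint, and (b) the excess matrix strictly loses a rank. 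The sign condition $(a_i - a_0)^T \bar x \geq 0$ is precisely what controls the first-order change in the $i$-th active constraint and keeps both perturbed points feasible; this generalizes the null-space perturbation behind Beck's Theorem~\ref{Beck} from a linear subspace to a half-space. Iterating the reduction drives $M$ to zero and produces a feasible (UQ) point attaining $v({\rm SDP})$. The technical heart will be choosing the step size $s$ so that feasibility survives even when $(a_i - a_0)^T \bar x = 0$ for some binding index $i$, which is the sole remaining source of difficulty.
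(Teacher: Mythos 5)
Your Case (i) is exactly the paper's argument: Lemma \ref{lem:1} gives $v({\rm LP})=+\infty$ when $a_0\notin{\rm conv}\{a_1,\ldots,a_p\}$, while Slater's condition keeps $v({\rm SDP})$ finite, so the contrapositive of Theorem \ref{thm:sdp1} closes the case. The gap is in Case (ii). First, a small point: when (i) fails, $a_0\in{\rm conv}\{a_1,\ldots,a_p\}$, so the dual LP is feasible and $v({\rm LP})<+\infty$ no matter what; moreover the direction $(\bar x,\,2a_0^T\bar x)$ leaves the LP objective $y-2a_0^Tx$ \emph{unchanged}, so your ``unbounded'' branch is vacuous and the recession direction is never ``strictly improving.'' The real issue is your fallback. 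The symmetric perturbation $x^*\mapsto x^*\pm s\bar x$ with averaging is incompatible with the hypothesis: $(a_i-a_0)^T\bar x\ge 0$ only guarantees that the constraints become \emph{slacker} in the $+\bar x$ direction, so the $-s\bar x$ point generally leaves the feasible set, and the ``average back'' device (which needs both endpoints feasible) breaks down precisely when some $(a_i-a_0)^T\bar x>0$. You flag the binding-index difficulty yourself but do not resolve it, and the whole rank-reduction program is left as a sketch; as written it is not a proof.

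What you are missing is that no SDP-level rank reduction is needed: the one-sided motion already suffices if you stay inside the LP. In the reformulated problem ${\rm (LP')}$ (maximize $t$ subject to $t-2(a_i-a_0)^Tx+b_i\le 0$), an optimal $(x^*,t^*)$ remains feasible and optimal after the shift $x^*\mapsto x^*+\alpha\bar x$ for every $\alpha>0$, since each constraint only gains slack $2\alpha(a_i-a_0)^T\bar x\ge 0$ and the objective value $t^*$ is untouched. Because $\bar x\neq 0$, $\|x^*+\alpha\bar x\|^2$ grows quadratically in $\alpha$ while $t^*$ is fixed (and $y=t^*+2a_0^Tx$ grows at most linearly), so for large $\alpha$ one obtains an optimal LP solution with $x^Tx>y$. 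Part (b) of Theorem \ref{thm:sdp2} --- itself proved by an intermediate-value argument along the segment joining two optimal LP solutions, not by rank reduction --- then yields $v({\rm SDP})=v({\rm UQ})$ directly. So the correct completion of your Case (ii) routes through Theorem \ref{thm:sdp2}(b) rather than through a perturbation of the SDP optimal pair.
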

Theorem  \ref{Beck} is a corollary of Theorem \ref{thm:sdp0}, as shown in the following.
\begin{prop}
If $p\leq n$, then either Case  (i) or Case  (ii) holds.
\end{prop}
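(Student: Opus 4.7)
My plan is to argue by cases on whether Case~(i) already holds. If $a_0\notin {\rm conv}\{a_1,\ldots,a_p\}$, there is nothing to prove, so I would suppose $a_0\in {\rm conv}\{a_1,\ldots,a_p\}$ and aim to establish Case~(ii) under this additional hypothesis.

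In that situation I would write $a_0=\sum_{i=1}^p \lambda_i a_i$ with $\lambda_i\ge 0$ and $\sum_{i=1}^p \lambda_i=1$ (so that not all $\lambda_i$ vanish), and then rearrange to $\sum_{i=1}^p \lambda_i (a_i-a_0)=0$. The point of this rewrite is that it exhibits a nontrivial linear dependence among the $p$ translated vectors $\{a_i-a_0\}_{i=1}^p$, which forces $\dim\left({\rm span}\{a_i-a_0\}_{i=1}^p\right)\le p-1$. Combined with the standing hypothesis $p\le n$, this bound gives $\dim\left({\rm span}\{a_i-a_0\}_{i=1}^p\right)\le n-1$, so its orthogonal complement in $\Bbb R^n$ has dimension at least one. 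Any nonzero vector $x$ in that orthogonal complement satisfies $(a_i-a_0)^T x=0\ge 0$ for every $i$, which is exactly what Case~(ii) requires.

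I do not anticipate a real obstacle here: the argument is essentially a one-line dimension count. The only conceptual point is recognizing that the membership of $a_0$ in ${\rm conv}\{a_1,\ldots,a_p\}$ is precisely a nontrivial nonnegative (hence in particular linear) dependence among the translated vectors $a_i-a_0$, after which the inequality $p\le n$ immediately produces the desired vector in the cone of Case~(ii) by passing to the orthogonal complement of their span.
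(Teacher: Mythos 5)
Your proof is correct and follows essentially the same route as the paper: both reduce to the observation that $a_0\in{\rm conv}\{a_1,\ldots,a_p\}$ forces ${\rm rank}\left([a_1-a_0,\ldots,a_p-a_0]\right)\le p-1\le n-1$ and then pick a nonzero vector orthogonal to the span. Your write-up is in fact slightly more explicit than the paper's, since you spell out why the convex combination yields the nontrivial linear dependence behind the rank bound.
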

\begin{proof}
%Suppose (i) does not hold true, we are sufficient to show that  (ii)
%If $a_0\not\in$ conv$\left\{a_1,\ldots,a_p\right\}$,
%then according to Theorem \ref{thm:sdp0}, $v({\rm SDP})=v({\rm UQ})$.
If $p\leq n$ and $a_0\in$ conv$\left\{a_1,\ldots,a_p\right\}$, then
\[
{\rm rank}\left(\left[a_1-a_0,\cdots,a_p-a_0\right]\right)\leq p-1\leq n-1.
\]
Therefore, $\mathcal{N}\left(\left[a_1-a_0,\cdots,a_p-a_0\right]\right)\ge 1$, which implies that
\[
\left\{x\in\Bbb R^n: (a_i-a_0)^Tx\geq 0,i=1,\cdots,p\right\}\neq\left\{0\right\}.
 \]
 That is, if Case $(i)$ is not true, then Case $(ii)$ holds.
\end{proof}

\subsection{Linear programming relaxation}

%We first relax (${\rm UQ}$) to the following linear programming problem by letting $y=x^Tx$:
Simply introducing a new variable $y\in\Bbb R$ to replace the nonconvex term $x^Tx$, we obtain the following  linear programming relaxation:
\begin{eqnarray}
{\rm (LP)}~~&\max& y-2a_0^Tx\nonumber\\
&{\rm s.t.}& y-2a_i^Tx+b_i\leq 0,~i=1,\ldots,p.\label{LP-constraint}
\end{eqnarray}
%It will be interesting to see  that there is no need to add the nonnegative constraint $y\geq 0$.

We first show that the linear programming bound $v$(LP) is proper.
\begin{lem}\label{lem:1}
v$({\rm LP})<+\infty$ if and only if
$a_0\in$ ${\rm conv}\left\{a_1,\ldots,a_p\right\}$.
\end{lem}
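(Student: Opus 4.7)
The plan is to prove this via linear programming duality, for which the feasibility of the dual encodes exactly the convex-hull condition.

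First, I would dualize (LP). Attaching nonnegative multipliers $\lambda_i$ to constraint $(\ref{LP-constraint})$ and forming the Lagrangian, the coefficient of $y$ forces $\sum_{i=1}^p \lambda_i = 1$ and the coefficient of $x$ forces $\sum_{i=1}^p \lambda_i a_i = a_0$, so the dual of (LP) is
\begin{eqnarray*}
\min & & -\sum_{i=1}^p \lambda_i b_i \\
\mathrm{s.t.} & & \sum_{i=1}^p \lambda_i = 1,~\sum_{i=1}^p \lambda_i a_i = a_0,~\lambda_i\geq 0,~i=1,\ldots,p.
\end{eqnarray*}
The feasible set of this dual is nonempty if and only if $a_0 \in \mathrm{conv}\{a_1,\ldots,a_p\}$.

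Next I would check that (LP) is always feasible: taking $x=0$ and any $y \le \min_i(-b_i)$ (which is positive by Assumption~\ref{ass2}, hence such $y$ exists), all constraints $y - 2a_i^Tx + b_i \leq 0$ are satisfied. Since the primal is feasible, LP strong duality gives the dichotomy: either the dual is feasible (in which case $v(\mathrm{LP})$ equals the finite dual optimum) or the dual is infeasible (in which case $v(\mathrm{LP}) = +\infty$). Combining with the previous paragraph, $v(\mathrm{LP}) < +\infty$ iff the dual is feasible iff $a_0 \in \mathrm{conv}\{a_1,\ldots,a_p\}$, which is the desired equivalence.

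There is essentially no hard step; the only point that needs a moment of care is verifying primal feasibility, which is where Assumption~\ref{ass2} ($b_i<0$) gets used. If one preferred to avoid quoting LP duality, an equivalent path is to eliminate $y$ (reducing the LP to $\max_x \min_i \{2(a_i-a_0)^Tx - b_i\}$), observe that unboundedness occurs iff there exists a direction $d$ with $(a_i-a_0)^Td > 0$ for all $i$, and then invoke Gordan's theorem to rewrite this alternative as $0 \in \mathrm{conv}\{a_i-a_0: i=1,\ldots,p\}$; but the LP-duality route is more direct and yields the dual problem itself, which may also be useful later in the paper.
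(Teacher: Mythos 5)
Your proposal is correct and follows essentially the same route as the paper: form the dual (DLP), observe that its feasibility is exactly the condition $a_0\in{\rm conv}\{a_1,\ldots,a_p\}$, and invoke LP duality. Your explicit verification that the primal (LP) is feasible (via $x=0$ and Assumption~\ref{ass2}) is a small but welcome extra step that the paper leaves implicit when it asserts $v({\rm LP})=v({\rm DLP})$.
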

\begin{proof}
The dual problem of the linear programming  (${\rm LP}$) reads as follows:
\begin{eqnarray}
{\rm (DLP)}~~&\min& -\sum_{i=1}^pb_i\lambda_i\nonumber\\
&{\rm s.t.}&\sum_{i=1}^p\lambda_ia_i=a_0,\\
&&\sum_{i=1}^p\lambda_i=1,\lambda_i\geq 0,~i=1,\ldots,p.\nonumber
\end{eqnarray}
Since strong duality holds for linear programming,
we have $v({\rm LP})=v({\rm DLP}$). Therefore, $v({\rm LP})<+\infty$ if and only if $({\rm DLP})$ has a feasible solution, that is, $a_0\in$ conv$\left\{a_1,\ldots,a_p\right\}$. The proof is complete.
\end{proof}

Next, it is interesting to see that the linear programming bound $v$(LP) is as tight as the SDP relaxation  if (SDP) is not tight.
\begin{thm}\label{thm:sdp1}
If $v({\rm SDP})>v({\rm UQ})$, then $v({\rm SDP})=v({\rm LP})<+\infty$.
\end{thm}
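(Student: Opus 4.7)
The plan is to prove $v({\rm SDP})=v({\rm LP})$ by sandwiching, where $v({\rm SDP})\le v({\rm LP})$ holds unconditionally and the reverse inequality uses the hypothesis $v({\rm SDP})>v({\rm UQ})$. First I would establish $v({\rm LP})<+\infty$: by the contrapositive of Theorem~\ref{thm:sdp0}(i), the hypothesis forces $a_0\in{\rm conv}\{a_1,\ldots,a_p\}$, so Lemma~\ref{lem:1} applies.

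For the easy direction $v({\rm SDP})\le v({\rm LP})$, I would show that every $\lambda$ feasible for (DLP) lifts to a feasible pair $(\lambda,\tau=-\sum_i\lambda_ib_i)$ for (D-SDP) with the same objective value: since $\sum_i\lambda_i=1$, $\sum_i\lambda_ia_i=a_0$, and the bottom-right entry then equals zero, the matrix in the (D-SDP) constraint degenerates to the zero matrix, which is trivially PSD. Combining this with strong duality for both (SDP) and (LP) (both enjoy Slater's condition, as already recorded in the excerpt) yields $v({\rm SDP})=v({\rm D\text{-}SDP})\le v({\rm DLP})=v({\rm LP})$.

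For the substantive reverse inequality, I would pick primal and dual (SDP) optima---the lifted matrix $T^*$ with blocks $Y^*$, $x^*$, and $1$, and the pair $(\lambda^*,\tau^*)$, both existing by Slater---and analyze via complementary slackness $T^*S^*=0$, where $S^*$ is the PSD slack of the (D-SDP) constraint evaluated at $(\lambda^*,\tau^*)$. The hypothesis excludes $Y^*=x^*(x^*)^T$, because otherwise $x^*$ would be (UQ)-feasible with value $v({\rm SDP})=v({\rm UQ})$. Writing $T^*$ as the sum of the rank-one PSD matrix whose image is spanned by the vector whose last coordinate equals $1$, plus the block-diagonal PSD summand ${\rm diag}(Y^*-x^*(x^*)^T,\,0)$ whose image has last coordinate $0$, the two ranges intersect only at the origin, so ${\rm rank}(T^*)\ge 2$. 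Then $T^*S^*=0$ forces ${\rm rank}(S^*)\le(n+1)-2=n-1$. But the top-left $n\times n$ principal submatrix of $S^*$ equals $(s^*-1)I_n$, where $s^*:=\sum_i\lambda_i^*$, and a principal submatrix cannot exceed the full matrix in rank, so we must have $s^*=1$. The PSD property of $S^*$ with zero top-left block then forces the off-diagonal block $a_0-\sum_i\lambda_i^*a_i$ to vanish as well. Hence $\lambda^*$ is (DLP)-feasible, and $v({\rm LP})=v({\rm DLP})\le-\sum_i\lambda_i^*b_i\le\tau^*=v({\rm SDP})$.

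The main obstacle is the block complementary-slackness rank count: establishing ${\rm rank}(T^*)\ge 2$ rigorously via the transversality of the two PSD summands of $T^*$, then pushing this through $T^*S^*=0$ to force both the scalar identity $s^*=1$ and the vector identity $a_0=\sum_i\lambda_i^*a_i$ from the block structure of $S^*$.
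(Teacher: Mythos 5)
Your argument is correct, and it takes a genuinely different route from the paper. The paper works entirely on the primal side: it takes an SDP optimizer $(x^*,Y^*)$, redistributes the excess trace to get $\widetilde{Y}=x^*x^{*T}+\frac{{\rm Tr}(Y^*)-x^{*T}x^*}{n}I\succ x^*x^{*T}$, and then runs an $\epsilon$--$\delta$ continuity argument showing that every LP-feasible point near $(x^*,{\rm Tr}(\widetilde{Y}))$ lifts to an SDP-feasible point near $(x^*,\widetilde{Y})$, so that $(x^*,{\rm Tr}(\widetilde Y))$ is a local, hence global, maximizer of the linear program. You instead argue on the dual side via complementary slackness: the hypothesis rules out rank-one primal optimizers, so ${\rm rank}(T^*)\ge 2$ (your decomposition of $T^*$ into the rank-one part and ${\rm diag}(Y^*-x^*x^{*T},0)$, with transversal ranges, is sound, since for PSD summands $\ker(A+B)=\ker A\cap\ker B$), whence ${\rm rank}(S^*)\le n-1$, forcing $\sum_i\lambda_i^*=1$ from the $(s^*-1)I_n$ block and then $a_0=\sum_i\lambda_i^*a_i$ from positive semidefiniteness with a vanishing diagonal block. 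This yields the chain $v({\rm LP})=v({\rm DLP})\le-\sum_i\lambda_i^*b_i\le\tau^*=v({\rm SDP})$, and with your (correct) easy direction $v({\rm SDP})\le v({\rm LP})$ the theorem follows. Your route produces the dual certificate explicitly and shows $a_0\in{\rm conv}\{a_1,\ldots,a_p\}$ as a by-product; the paper's route avoids SDP complementarity altogether and hands the later proof of Theorem~\ref{thm:sdp2}(b) an explicit LP-optimal point $(x^*,{\rm Tr}(\widetilde Y))$ with ${\rm Tr}(\widetilde Y)>x^{*T}x^*$ (which your argument also supplies, since every SDP optimizer has $Y^*\neq x^*x^{*T}$).

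One step you must delete: your opening appeal to ``the contrapositive of Theorem~\ref{thm:sdp0}(i)'' to get $a_0\in{\rm conv}\{a_1,\ldots,a_p\}$ is circular, because the paper proves Theorem~\ref{thm:sdp0}(i) \emph{from} the present theorem. Fortunately this step is redundant in your proof: DLP-feasibility of $\lambda^*$ falls out of your complementary-slackness analysis, and that alone gives $v({\rm LP})=v({\rm DLP})\le\tau^*<+\infty$.
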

\begin{proof}
%Under Assumption \ref{ass2}, we have $\|x-a_i\|^2\leq -b_i+\|a_i\|^2$. Then for $\forall x\in\Omega_1$, there exists a constant $c$ such that $\|x\|\leq c$.
Since Slater's condition holds for (SDP) and its conic dual (D-SDP), we have  $v({\rm SDP})<+\infty$ and it is attained at an optimal solution  $(x^*, Y^*)$.

The constraint (\ref{SDP:2}) implies that
\begin{equation}
{\rm Tr}(Y^*)=I\bullet Y^*\ge x^{*T}x^*.\label{ine:1}
\end{equation}
Substituting (\ref{ine:1}) into (\ref{SDP:1}) implies that $x^*\in\Omega_1$. If the constraint (\ref{ine:1}) holds as an equality, then $Y^*=x^*x^{*T}$ and hence $x^*$ is the optimal solution of (UQ). It must hold that $v({\rm SDP})=v({\rm UQ})$.
Therefore, under the assumption $v({\rm SDP})>v({\rm UQ})$, the constraint (\ref{ine:1}) is inactive, i.e.,  ${\rm Tr}(Y^*)>x^{*T}x^*$.
Define
\[
\widetilde{Y}=x^*x^{*T}+\frac{{\rm Tr}(Y^*)-x^{*T}x^*}{n}\cdot I.
\]
Then, we have ${\rm Tr}(\widetilde{Y})={\rm Tr}(Y^*)$, $\widetilde{Y}\succ x^*x^{*T}$
%\begin{eqnarray}\label{con:inactive}
%\left(\begin{array}{cc} \widetilde{Y}& x^*\\
%x^{*T}&1
%\end{array}\right)\succ 0,
%\end{eqnarray}
and $(x^*,\widetilde{Y})$ remains an optimal solution to $({\rm SDP})$. There is a sufficiently small $\epsilon>0$ such that
$Y\succ xx^{T}$ for all $(x,Y)$ satisfying (\ref{SDP:1}) and
\begin{equation}
\|x-x^*\|+\|Y-\widetilde{Y}\|<\epsilon.\label{eps}
\end{equation}
 By the optimality of $(x^*,\widetilde{Y})$, for all $(x,Y)$ satisfying (\ref{SDP:1}) and (\ref{eps}), we have
\begin{equation}
{\rm Tr}(Y)-2a_0^Tx \leq{\rm Tr}(\widetilde{Y})-2a_0^Tx^*. \label{Yxstar}
\end{equation}
%for any $(x,Y)~{\rm satisfying}~(\ref{SDP:1})$ and $\|x-x^*\|+\|Y-\widetilde{Y}\|<\epsilon.$

Let $\widetilde{y}={\rm Tr}(\widetilde{Y})$. It is trivial to see that $(x^*,\widetilde{y})$ is a feasible solution of (LP), that is, all inequalities (\ref{LP-constraint}) hold true at $(x^*,\widetilde{y})$. Next we show that $(x^*,\widetilde{y})$ is also an optimal solution of (LP).

%For the sake of simplicity, we denote the feasible region of (LP) by $\Omega$.
Define the following two continuous functions:
\begin{eqnarray}
&&f_1(x,y)=y-x^Tx,\nonumber\\
&&f_2(x,y)=\|x-x^*\|+\left\|xx^T+\frac{y-x^{T}x}{n}\cdot I-x^*x^{*T}-\frac{\widetilde{y}-x^{*T}x^*}{n}\cdot I\right\|.\nonumber
\end{eqnarray}
According to the above definitions of $x^*$, $\widetilde{Y}$ and $\widetilde{y}$, we have
\begin{eqnarray}
&&f_1(x^*,\widetilde{y})=\widetilde{y}-x^{*T}x^*={\rm Tr}(\widetilde{Y})-x^{*T}x^*>0,\nonumber\\
&& f_2(x^*,\widetilde{y})=0.\nonumber
\end{eqnarray}
Therefore, by the continuity of $f_1(x,y)$,  there exists $\delta_1>0$ such that
\begin{equation}
|f_1(x,y)-f_1(x^*,\widetilde{y})|<\frac{1}{2}f_1(x^*,\widetilde{y}) \label{f1}
\end{equation}
holds for any $(x,y)$ satisfying both (\ref{LP-constraint}) and $\|x-x^*\|+|y-\widetilde{y}|<\delta_1$. Notice that it follows from (\ref{f1}) that
\begin{equation}
f_1(x,y)>f_1(x^*,\widetilde{y})-\frac{1}{2}f_1(x^*,\widetilde{y})=\frac{1}{2}f_1(x^*,\widetilde{y})>0. \label{f11}
\end{equation}
Similarly, for the sufficient small $\epsilon>0$ used in (\ref{eps}), there exists $\delta_2>0$ such that  
\begin{equation}
|f_2(x,y)-f_2(x^*,\widetilde{y})|<\epsilon \label{f2}
\end{equation}
holds for any $(x,y)$ satisfying both (\ref{LP-constraint}) and $\|x-x^*\|+|y-\widetilde{y}|<\delta_2$.

Define $\delta=\min\{\delta_1,\delta_2\}>0$ and
\[
Y(x,y)=xx^{T}+\frac{y-x^{T}x}{n}\cdot I.
\]
Let $(x,y)$ be any vector satisfying both (\ref{LP-constraint}) and $\|x-x^*\|+|y-\widetilde{y}|<\delta$. Then both (\ref{f11}) and (\ref{f2}) hold true at $(x,y)$. It follows from (\ref{f11}) that $y>x^Tx$, which implies that
\[
Y(x,y)\succ xx^{T}.
\]
According to (\ref{f2}), we obtain
\begin{equation}
\|x-x^*\|+\left\|Y(x,y)- \widetilde{Y}\right\|=f_2(x,y)<\epsilon. \label{xY}
\end{equation}
Moreover, since $(x,y)$ satisfies (\ref{LP-constraint}) and ${\rm Tr}(Y(x,y))=y$, it holds that $(x,Y(x,y))$ satisfies (\ref{SDP:1}). Notice that (\ref{eps}) holds true at $(x,Y(x,y))$ due to (\ref{xY}). Therefore, (\ref{Yxstar}) holds at $(x,Y(x,y))$, that is,
\[
{\rm Tr}(Y(x,y))-2a_0^Tx \leq{\rm Tr}(\widetilde{Y})-2a_0^Tx^*.
\]
Since ${\rm Tr}(Y(x,y))=y$ and ${\rm Tr}(\widetilde{Y})=\widetilde{y}$, we now conclude that
 \begin{equation}
y-2a_0^Tx \leq \widetilde{y}-2a_0^Tx^*
\end{equation}
holds for any $(x,y)$ satisfying both (\ref{LP-constraint}) and $\|x-x^*\|+|y-\widetilde{y}|<\delta$.
It follows that $(x^*,\widetilde{y})$ is a local maximizer of (LP). Consequently,  $(x^*,\widetilde{y})$ is a (global) optimal solution of (LP) and hence
\[
v({\rm LP})=\widetilde{y}-2a_0^Tx^*={\rm Tr}(\widetilde{Y})-2a_0^Tx^*=v({\rm SDP}).
\]
The proof is complete.
\end{proof}

The assumption is necessary in Theorem \ref{thm:sdp1}. Suppose $v({\rm SDP})=v({\rm UQ})$, it may happen either $v({\rm LP})=+\infty$ or $v({\rm SDP})<v({\rm LP})<+\infty$, as demonstrated by the following example.
\begin{exam}\label{exam1}
Let $n=1$. For any $\alpha$, consider the following examples:
\begin{eqnarray*}
{\rm(UQ(\alpha))}~~&\max ~~& x^2-\alpha x\\
&{\rm s.t.}& x^2+x-4\leq 0,\\
&&x^2-x\leq 0.
\end{eqnarray*}
%In this case, $a_0=\alpha/2$, $a_1=-1/2$, $a_2=1/2$ and ${\rm conv}\{a_1,a_2\}=[-1/2,1/2]$.
It always holds that $v({\rm SDP})=v({\rm UQ})$.
Moreover, one can verify that
\begin{itemize}
\item[(1)] If $\alpha\not\in[-1,1]$,  $v({\rm LP}(\alpha))=+\infty >v({\rm SDP}(\alpha))$. (Lemma \ref{lem:1})%=1-\alpha$.
\item[(2)] If $-1\leq \alpha < 1$,  $v({\rm LP}(\alpha))=2(1-\alpha) >v({\rm SDP}(\alpha)) =1-\alpha$.
\item[(3)] If $\alpha =1$,  $v({\rm LP}(\alpha))= v({\rm SDP}(\alpha)) =0$.
%\item[(d)] If $\alpha>1$,  $v({\rm LP}(\alpha))=+\infty>v({\rm SDP}(\alpha))=v({\rm UQ}(\alpha))=0$.
\end{itemize}
\end{exam}

Moreover, we can know more from \textit{any} optimal solution of the linear programming relaxation (LP).
\begin{thm}\label{thm:sdp2}
Suppose $v({\rm LP})< +\infty$ and let $(x^*, y^*)$ be  any optimal solution of ${\rm (LP)}$. Then, we have
\begin{itemize}
\item[(a)] If $x^{*T}x^*=y^*$, then $v({\rm LP})=v({\rm SDP})=v({\rm UQ})$.
\item[(b)]If $x^{*T}x^*> y^*$, then $v({\rm SDP})=v({\rm UQ})$.
\item[(c)] If $x^{*T}x^*<y^*$, then $v({\rm LP})=v({\rm SDP})$,
\end{itemize}
\end{thm}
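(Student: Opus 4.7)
My plan is to first establish the basic inequality chain $v({\rm UQ})\le v({\rm SDP})\le v({\rm LP})$: any UQ-feasible $x$ lifts to the SDP-feasible pair $(x,xx^T)$ with the same objective value, while any SDP-feasible pair $(x,Y)$ projects to the LP-feasible pair $(x,{\rm Tr}(Y))$, again with matching objective. With this chain in hand, each part of the theorem reduces either to a single reverse inequality or to a contradiction.

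For case (a), when $y^*=x^{*T}x^*$, substituting $y^*$ into the LP constraints (\ref{LP-constraint}) recovers exactly the UQ constraints at $x^*$, so $x^*$ is UQ-feasible with objective equal to $v({\rm LP})$; this forces $v({\rm UQ})\ge v({\rm LP})$ and collapses the chain to equalities. For case (c), when $y^*>x^{*T}x^*$, I would explicitly set $Y^*:=x^*x^{*T}+\frac{y^*-x^{*T}x^*}{n}I$, which satisfies $Y^*\succ x^*x^{*T}$ and ${\rm Tr}(Y^*)=y^*$. Then $(x^*,Y^*)$ fulfills (\ref{SDP:1})--(\ref{SDP:2}) with SDP-objective $v({\rm LP})$, yielding $v({\rm SDP})\ge v({\rm LP})$ and the desired equality. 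Both of these are direct verifications.

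Case (b), where $y^*<x^{*T}x^*$, is the main obstacle, and I would handle it by contradiction. Assume $v({\rm SDP})>v({\rm UQ})$; Theorem \ref{thm:sdp1} then supplies $v({\rm SDP})=v({\rm LP})<+\infty$, and the construction in its proof produces an SDP-optimal pair $(\widehat{x},\widetilde{Y})$ with $\widetilde{Y}\succ\widehat{x}\widehat{x}^T$. Setting $\widehat{y}:={\rm Tr}(\widetilde{Y})$ yields a second LP-optimal point $(\widehat{x},\widehat{y})$ satisfying $\widehat{y}>\widehat{x}^T\widehat{x}$. Since the LP-optimal set is a face of a polyhedron and hence convex, the entire segment from $(x^*,y^*)$ to $(\widehat{x},\widehat{y})$ consists of LP-optimal points. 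Define $g(t):=y^*+t(\widehat{y}-y^*)-\|x^*+t(\widehat{x}-x^*)\|^2$; then $g$ is continuous with $g(0)<0$ and $g(1)>0$, so by the intermediate value theorem there exists $t^*\in(0,1)$ with $g(t^*)=0$. The corresponding LP-optimal pair $(\bar{x},\bar{y})$ satisfies $\bar{y}=\bar{x}^T\bar{x}$, and applying part (a) to it gives $v({\rm UQ})=v({\rm LP})=v({\rm SDP})$, contradicting $v({\rm SDP})>v({\rm UQ})$. Hence $v({\rm SDP})=v({\rm UQ})$.

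The entire difficulty is concentrated in case (b): cases (a) and (c) are direct substitutions or explicit SDP-lifts, while (b) requires invoking Theorem \ref{thm:sdp1} to produce a second LP optimum on the opposite side of the quadric $\{y=x^Tx\}$ and then using convexity of the LP-optimal face together with a one-dimensional intermediate-value step to reduce back to case (a). Distinctness of $(x^*,y^*)$ and $(\widehat{x},\widehat{y})$ is automatic from the strict inequalities $y^*<x^{*T}x^*$ and $\widehat{y}>\widehat{x}^T\widehat{x}$, so no degeneracy issue arises.
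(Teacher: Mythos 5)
Your proposal is correct and follows essentially the same route as the paper's own proof: the same direct verification in case (a), the same lift $Y^*=x^*x^{*T}+\frac{y^*-x^{*T}x^*}{n}I$ in case (c), and in case (b) the same contradiction via Theorem \ref{thm:sdp1}, convexity of the LP optimal set, and an intermediate-value argument along the segment joining the two LP optima.
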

\begin{proof}
Notice that we always have $v({\rm LP})\ge v({\rm SDP})\ge v({\rm UQ})$ according to their definitions.
In the case of $x^{*T}x^*= y^*$, the linear programming relaxation (LP) is tight, i.e., $v({\rm LP})=v({\rm UQ})$. Then it is trivial to see that $v({\rm LP})=v({\rm SDP})=v({\rm UQ})$.

Suppose $x^{*T}x^*> y^*$. Assume $v({\rm SDP})>v({\rm UQ})$, according to the proof of Theorem \ref{thm:sdp1},  there is an optimal solution of $({\rm SDP})$, denoted by
$(\widetilde{x},\widetilde{Y})$, such that ${\rm Tr}(\widetilde{Y})>\widetilde{x}^T\widetilde{x}$. And moreover, $(\widetilde{x},{\rm Tr}(\widetilde{Y}))$ is an optimal solution of $({\rm LP})$. Therefore, for any $\alpha \in [0,1]$, $(\alpha \widetilde{x}+(1-\alpha)x^*,\alpha{\rm Tr}(\widetilde{Y})+(1-\alpha)y^*)$ is also an optimal solution of $({\rm LP})$. Define
\[
F(\alpha)=\alpha{\rm Tr(\widetilde{Y})}+(1-\alpha)y^*-\|\alpha \widetilde{x}+(1-\alpha)x^*\|^2.
\]
Since $F(0)< 0<F(1)$ and $F(\alpha)$ is continuous, we conclude that there is an $\alpha \in (0,1)$ such that $F(\alpha)=0$, which implies that $v({\rm LP})=v({\rm UQ})$. It immediately follows that
$v({\rm SDP})=v({\rm UQ})$, which is a contradiction. Therefore, in this case, we must have $v({\rm SDP})=v({\rm UQ})$.

For the last case $x^{*T}x^*< y^*$, define $Y^*=x^*x^{*T}+\frac{y^*-x^{*T}x^*}{n}\cdot I$. Then, one can verify that $(x^*,Y^*)$ satisfies the constraints (\ref{SDP:1})-(\ref{SDP:2}) and the corresponding objective function value (\ref{SDP:0}) is
\[
\left(\begin{array}{cc}I&-a_{0}\\-a_{0}^T&0
\end{array}\right)\bullet \left(\begin{array}{cc}Y^*&x^*\\x^{*T}&1
\end{array}\right)=y^*-2a_0^Tx^*=v{\rm(LP)}.
\]
Since $v({\rm LP})\ge v({\rm SDP})$, $(x^*,Y^*)$ is an optimal solution to $({\rm SDP})$.
Thus, $v({\rm LP})=v({\rm SDP})$.
The proof is complete.
\end{proof}

Based on Lemma \ref{lem:1}, Theorems \ref{thm:sdp1} and \ref{thm:sdp2}, now we can prove Theorem \ref{thm:sdp0}.

\noindent\textbf{Proof of Theorem \ref{thm:sdp0}}.

Assume Case (i) holds, i.e., $a_0\not\in$ conv$\left\{a_1,\ldots,a_p\right\}$. According to Lemma \ref{lem:1}, $v({\rm LP})=+\infty$. Suppose $v({\rm SDP})=v({\rm UQ})$ is not true, then it must hold that $v({\rm SDP})>v({\rm UQ})$. According to Theorem \ref{thm:sdp1}, we have $v({\rm LP})=v({\rm SDP})<+\infty$, which is a contradiction.

Now we assume that Case (i) does not hold, but Case (ii) holds, i.e.,  there is a vector
$v\neq 0$ such that $(a_i-a_0)^Tv\geq 0$ for $i=1,\cdots,p$.

Based on the linear transformation $t= y-2a_0^Tx$, (LP) has the following equivalent reformulation:
\begin{eqnarray}
{\rm (LP')}~~&\max& t \nonumber\\
&{\rm s.t.}& t-2(a_i-a_0)^Tx+b_i\leq 0,~i=1,\ldots,p.\label{lpp}
\end{eqnarray}
According to Lemma \ref{lem:1}, $v({\rm LP'})<+\infty$. Then, $({\rm LP'})$ has an optimal solution, denoted by $(x^*,t^*)$. It is easy to see that any solution $(x, t^*)$ satisfying (\ref{lpp}) remains optimal for $({\rm LP'})$.  Define $\widetilde{x}(\alpha)=x^*+\alpha v$. For all $\alpha>0$, we have
\begin{eqnarray*}
t^*-2(a_i-a_0)^T\widetilde{x}(\alpha)+b_i &=&t^*-2(a_i-a_0)^Tx^*-2\alpha(a_i-a_0)^Tv+b_i\\
&\leq& t^*-2(a_i-a_0)^Tx^*+b_i\\
&\leq&0.
\end{eqnarray*}
That is, for any $\alpha>0$, $(\widetilde{x}(\alpha),t^*)$ is optimal for $({\rm LP'})$. Since $v\neq 0$, for sufficiently large $\alpha$, we have $t^*<\widetilde{x}(\alpha)^T\widetilde{x}(\alpha)$. Then, according to Theorem \ref{thm:sdp2}, we have $v({\rm SDP})=v({\rm UQ})$. This completes the proof of Theorem \ref{thm:sdp0}. \hfill \quad{$\Box$}\smallskip
%\end{proof}

\begin{rem}\label{rema}
Case (b) of Theorem \ref{thm:sdp2} implies that a solution of (UQ) can be obtained in polynomial time (e.g., via solving (SDP)), but (LP) itself  fails to find the global optimal solution of (UQ). It is not difficult to verify that, (SDP) is equivalent to the following second-order cone programming relaxation:
\begin{eqnarray}
{\rm (SOCP)}~~&\max& y-2a_0^Tx\label{SOCP}\\
&{\rm s.t.}& y-2a_i^Tx+b_i\leq 0,~i=1,\ldots,p,\nonumber\\
&&\left\|\left(\begin{array}{c}x\\ \frac{y-1}{2}\end{array}\right)\right\|
 \le  \frac{y+1}{2},\nonumber%x^TQx\leq t
\end{eqnarray}
where the last constraint  is actually equivalent to $x^Tx\leq y$.
\end{rem}

\subsection{Approximation algorithm for (${\rm UQ}$)}
%When $v({\rm SDP})=v({\rm UQ})$, we can get the exact solution to $({\rm UQ})$ by solving $({\rm SDP})$, where we actually solve its $({\rm SOCP})$.
For the hard case of (UQ) (i.e., Case (c) in Theorem \ref{thm:sdp2}), it often holds that $v({\rm UQ})<v({\rm LP})$. One can employ the approximation algorithms \cite{H15,Ts03} for solving the general nonconvex quadratic optimization with ellipsoid constraints $({\rm NC{\text-}EQP})$, which also provides an approximation bound of the returned solution.  In this section, we propose a new  approximation algorithm for (UQ) based on (LP) with an improved approximation bound, which is no longer dependent of $p$.

\begin{thm}\label{thm:4}
Suppose Assumption \ref{ass2} holds and
%$v({\rm LP})=v({\rm SDP})>v({\rm UQ})$
$v({\rm SDP})>v({\rm UQ})$.
% and $v({\rm LP})< +\infty$ (or $a_0\in\left\{a_1,\ldots,a_p\right\}$)
We can find a feasible solution $x$ of (UQ) in polynomial time satisfying
\begin{eqnarray}
%v({\rm LP})\geq v({\rm UQ})\geq
f_0(x)\geq \left(\frac{1-\gamma}{\sqrt{2}+\gamma}\right)^2\cdot v({\rm LP}),\label{inequa0}
\end{eqnarray}
where
$\gamma = \max_{i=1,\ldots,p}\frac{\|a_i\|}{\sqrt{-b_i+\|a_i\|^2}}<1$.
\end{thm}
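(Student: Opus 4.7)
The plan is an LP-based rounding in the spirit of Tseng, exploiting the rank-one ``gap'' between $y^{*}$ and $\|x^{*}\|^{2}$ that must persist under the hypothesis $v({\rm SDP})>v({\rm UQ})$. By Theorem~\ref{thm:sdp1}, the hypothesis forces $v({\rm LP})=v({\rm SDP})<+\infty$, and Theorem~\ref{thm:sdp2}(a),(b) force any LP optimum $(x^{*},y^{*})$ to satisfy $\|x^{*}\|^{2}<y^{*}$. I compute such $(x^{*},y^{*})$ in polynomial time and set $s:=\sqrt{y^{*}-\|x^{*}\|^{2}}>0$. Each LP constraint rewrites as the shrunken-ball inclusion $\|x^{*}-a_{i}\|^{2}\leq R_{i}^{2}-s^{2}$ with $R_{i}:=\sqrt{\|a_{i}\|^{2}-b_{i}}$; under Assumption~\ref{ass2}, $\gamma_{i}:=\|a_{i}\|/R_{i}\leq\gamma<1$.

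For a fixed unit vector $v\in\Bbb R^{n}$, form the candidates $\tilde{x}_{\sigma}:=x^{*}+\sigma s v$ for $\sigma\in\{\pm 1\}$. A direct expansion gives
\[
f_{0}(\tilde{x}_{\sigma})=v({\rm LP})+2\sigma s\,v^{T}(x^{*}-a_{0}),
\]
so choosing $\sigma$ as the sign of $v^{T}(x^{*}-a_{0})$ yields $f_{0}(\tilde{x}_{\sigma})\geq v({\rm LP})$; fix this $\sigma$. The point $\tilde{x}_{\sigma}$ may be (UQ)-infeasible, but since $0\in{\rm int}(\Omega_{1})$ the quadratic $f_{i}(\beta\tilde{x}_{\sigma})$ in $\beta$ has a unique positive root $\beta_{i}^{+}$, and I set $\hat{x}:=\beta^{*}\tilde{x}_{\sigma}$ with $\beta^{*}:=\min\{1,\min_{i}\beta_{i}^{+}\}$, which is (UQ)-feasible and explicitly computable in polynomial time.

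For the scaling bound, the triangle inequality together with the shrunken-ball inclusion and the elementary estimate $a+b\leq\sqrt{2(a^{2}+b^{2})}$ yield
\[
\|\tilde{x}_{\sigma}-a_{i}\|\leq\|x^{*}-a_{i}\|+s\leq\sqrt{R_{i}^{2}-s^{2}}+s\leq\sqrt{2}\,R_{i},
\]
hence $\|\tilde{x}_{\sigma}\|\leq\sqrt{2}\,R_{i}+\|a_{i}\|$ for every $i$. Viewing the numerator of $\beta_{i}^{+}$ as the increasing function $c\mapsto c+\sqrt{c^{2}+\|\tilde{x}_{\sigma}\|^{2}(R_{i}^{2}-\|a_{i}\|^{2})}$ of $c=a_{i}^{T}\tilde{x}_{\sigma}$ and minimizing via Cauchy--Schwarz $c\geq-\|a_{i}\|\|\tilde{x}_{\sigma}\|$ gives $\beta_{i}^{+}\geq(R_{i}-\|a_{i}\|)/\|\tilde{x}_{\sigma}\|$. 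Combining the two inequalities at the binding index $i^{*}$ and using the monotonicity of $t\mapsto(1-t)/(\sqrt{2}+t)$,
\[
\beta^{*}\geq\frac{R_{i^{*}}-\|a_{i^{*}}\|}{\sqrt{2}\,R_{i^{*}}+\|a_{i^{*}}\|}=\frac{1-\gamma_{i^{*}}}{\sqrt{2}+\gamma_{i^{*}}}\geq\frac{1-\gamma}{\sqrt{2}+\gamma}.
\]

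It remains to translate the scaling bound into the claimed objective bound. The restriction $g(\beta):=f_{0}(\beta\tilde{x}_{\sigma})=\beta^{2}\|\tilde{x}_{\sigma}\|^{2}-2\beta a_{0}^{T}\tilde{x}_{\sigma}$ is a convex quadratic in $\beta$ with the identity $g(\beta)-\beta^{2}g(1)=-2a_{0}^{T}\tilde{x}_{\sigma}\cdot\beta(1-\beta)$. Whenever $a_{0}^{T}\tilde{x}_{\sigma}\leq 0$, this gives immediately $f_{0}(\hat{x})\geq(\beta^{*})^{2}f_{0}(\tilde{x}_{\sigma})\geq(\beta^{*})^{2}v({\rm LP})$, and the previous step finishes. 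The main technical obstacle will be the complementary regime $a_{0}^{T}\tilde{x}_{\sigma}>0$, where the cross-term is negative and must be absorbed either by refining the direction $v$ (aligning the $f_{0}$-gain with the sign of $a_{0}^{T}\tilde{x}_{\sigma}$, for instance by choosing $v$ in the two-dimensional plane spanned by $x^{*}$ and $a_{0}$) or by the LP-duality identity $f_{0}(x)=v({\rm LP})+\sum_{i}\lambda_{i}f_{i}(x)$ for an LP-optimal dual multiplier $\lambda$, which lets one replace the troublesome linear term by a nonpositive combination of $f_{i}(\hat{x})\leq 0$.
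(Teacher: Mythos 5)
Your construction up to the scaling step is sound and closely parallels the paper's: the paper also perturbs the LP optimum along a direction of length $s=\sqrt{y^{*}-\|x^{*}\|^{2}}$ (it splits $x^{*}$ into two points $s_{1},s_{2}$ and uses a pigeonhole on $\min\{1/u_{1}^{2},1/u_{2}^{2}\}\le 2$ to get the same $\sqrt{2}$ feasibility factor that you obtain more directly from $\sqrt{R_{i}^{2}-s^{2}}+s\le\sqrt{2}R_{i}$), and it uses the same estimate $\overline{\tau}\ge(1-\gamma_{i})/(\sqrt{2}+\gamma_{i})$ for the retraction. But the case you flag as "the main technical obstacle," namely $a_{0}^{T}\tilde{x}_{\sigma}>0$, is a genuine gap, and neither of your two proposed fixes works as stated. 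The LP-duality identity $f_{0}(x)=v({\rm LP})+\sum_{i}\lambda_{i}f_{i}(x)$ gives, for any (UQ)-feasible $x$, only $f_{0}(x)\le v({\rm LP})$ — the wrong direction; it cannot be used to absorb the negative cross-term. And no refinement of the unit direction $v$ can force $a_{0}^{T}\tilde{x}_{\sigma}\le 0$: since $|a_{0}^{T}(\sigma s v)|\le s\|a_{0}\|$, whenever $a_{0}^{T}x^{*}>s\|a_{0}\|$ every admissible candidate $x^{*}+\sigma s v$ has $a_{0}^{T}\tilde{x}_{\sigma}>0$, and the requirement $\sigma v^{T}(x^{*}-a_{0})\ge 0$ (needed for $f_{0}(\tilde{x}_{\sigma})\ge v({\rm LP})$) further constrains the choice. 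So the proof as written does not establish the bound in the only regime where the scaling actually costs something.

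The paper's resolution — which also closes your gap with almost no extra work — is to negate the entire candidate point: if $a_{0}^{T}\tilde{x}_{\sigma}>0$, replace $\tilde{x}_{\sigma}$ by $-\tilde{x}_{\sigma}$. Since $f_{0}(-w)=\|w\|^{2}+2a_{0}^{T}w\ge\|w\|^{2}-2a_{0}^{T}w=f_{0}(w)$ when $a_{0}^{T}w\ge 0$, the objective guarantee $f_{0}\ge v({\rm LP})$ is preserved while the sign condition $a_{0}^{T}(-\tilde{x}_{\sigma})\le 0$ now holds, so your convexity argument $g(\beta)\ge\beta^{2}g(1)$ applies. The paper pays for the reflection in its feasibility estimate ($\rho_{i}\|-w-a_{i}\|\le\sqrt{2}+2\rho_{i}\|a_{i}\|$ rather than $\sqrt{2}$), which still yields the factor $(1-\gamma_{i})/(\sqrt{2}+\gamma_{i})$. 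In your framework the reflection is even cheaper: your Cauchy--Schwarz lower bound $\beta_{i}^{+}\ge(R_{i}-\|a_{i}\|)/\|\tilde{x}_{\sigma}\|$ depends only on $\|\tilde{x}_{\sigma}\|$, which is invariant under negation, so the same scaling bound holds verbatim for $-\tilde{x}_{\sigma}$. With that one additional step your argument becomes a complete and slightly cleaner proof of the theorem.
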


\begin{proof}
Assumption \ref{ass2} implies that $v({\rm UQ})>0$. According to Theorem \ref{thm:sdp1}, (LP) has a bounded solution, denoted by  $(x^*, y^*)$. As $v({\rm SDP})>v({\rm UQ})$, it follows from Theorem \ref{thm:sdp2} that $x^{*T}x^*<y^*$.
%Now, we have
%\[
%y^*-2a_0^Tx^*= v({\rm LP});~ y^*-2a_i^Tx^*\leq -b_i;~x^{*T}x^*<y^*, i=1,\ldots,p.
%\]
Then there must exist a nonzero vector $t\in \Bbb R^n$ such that $x^{*T}x^*+t^Tt=y^*$.
Since $v({\rm LP})=y^*-2a_0^Tx^*$, for any $\beta$, it holds that
\begin{eqnarray}
(\beta^2+1)(x^{*T}x^*+t^Tt-2a_0^Tx^*)=(\beta^2+1)\cdot v({\rm LP}).\label{eq:11}
\end{eqnarray}
Based on the facts that $x^{*T}x^*-2a_0^Tx^*<y^*-2a_0^Tx^*=v({\rm LP})$ and $t^Tt>0$, there exits a real value $\beta > 0$ such that
\begin{eqnarray}
(x^*+\beta t)^T(x^*+\beta t)-2a_0^T(x^*+\beta t)= v({\rm LP}).\label{eq:22}
\end{eqnarray}
Define
\[
u_{1}=\frac{1}{\sqrt{1+\beta^2}}, ~u_{2}=\frac{\beta}{\sqrt{1+\beta^2}}.
\]
The equation (\ref{eq:22}) is equivalent to
\begin{eqnarray}
(u_1 x^*+u_2t)^T(u_1 x^*+u_2t)-2u_1a_0^T(u_1 x^*+u_2t)=u_1^2\cdot v({\rm LP}). \label{eq:222}
\end{eqnarray}
The equation (\ref{eq:11}) minus the equation (\ref{eq:22}) equals
\begin{eqnarray*}
(\beta x^*-t)^T(\beta x^*-t)-2\beta a_0^T(\beta x^*-t)=\beta^2\cdot v({\rm LP}),
\end{eqnarray*}
which further implies that 
\begin{eqnarray}
(u_2x^*-u_1t)^T(u_2x^*-u_1t)-2u_2a_0^T(u_2x^*-u_1t)=u_2^2\cdot v({\rm LP}).\label{eq:333}
\end{eqnarray}
Define
\[
s_{1}=u_1x^*+u_2 t,~s_{2}=u_2 x^*-u_1t.
\]
The equations (\ref{eq:222}) and (\ref{eq:333}) can be recast as
\begin{eqnarray}
&&s_{1}^Ts_{1}- 2u_1a_0^Ts_{1}=u_1^2\cdot v({\rm LP}) ,\label{n1}\\
&&s_{2}^Ts_{2}- 2u_2 a_0^Ts_{2}=u_2^2\cdot v({\rm LP}),\label{n2}
\end{eqnarray}
respectively. Then, we can verify that
\[
s_{1}^Ts_{1}+s_{2}^Ts_{2}=y^*,~ u_1s_1+u_2s_2=x^*.
\]
%it follows from $y^*-2a_{i}^Tx^*\leq -b_i$ that
As $(x^*,y^*)$ satisfies the constraints (\ref{LP-constraint}) of (LP), we obtain
\begin{eqnarray}
s_{1}^Ts_{1}- 2u_1a_{i}^Ts_{1} +s_{2}^Ts_{2}- 2u_2 a_{i}^Ts_{2} \leq -b_i,~i=1,\ldots,p.\label{n3}
\end{eqnarray}
Define
\[
\rho_i=\frac{1}{\sqrt{-b_i+\|a_i\|^2}},~i=1,\ldots,p.
\]
It follows from Assumption \ref{ass2} that $0<\rho_i<\frac{1}{\|a_i\|}$.
We can equivalently rewrite the inequalities (\ref{n3}) as
\begin{eqnarray*}
\rho_i^2\|s_{1}-u_1a_i\|^2
+
\rho_i^2\|s_{2}-u_2  a_i\|^2\leq 1, ~i=1,\ldots,p.
\end{eqnarray*}
Then, we have
\begin{eqnarray*}
&\min& \left\{ \max_{i=1,\ldots,p}\frac{1}{u_1^2}\cdot \rho_i^2\|s_{1}-u_1a_i\|^2,
\max_{i=1,\ldots,p}\frac{1}{u_2^2}\cdot \rho_i^2\|s_{2}-u_2a_i\|^2\right\}\\
&\le&
\min  \left\{  \frac{1}{u_1^2},~\frac{1}{u_2^2}\right\}\\
&=&
\left\{\begin{array}{ll}1+\beta^2,
& {\rm if}~|\beta|\leq 1,\\
1+\frac{1}{\beta^2},
& {\rm otherwise},\end{array}\right.\\
&\leq&2.
\end{eqnarray*}
Consequently, there is an index $\overline{j}\in \{1,2\}$ such that
\begin{equation}
\rho_i\|s_{\overline{j}}/u_{\overline{j}}-a_{i}\|
\le \sqrt{2},~i=1,\ldots,p.\label{16}
\end{equation}

Next we will construct a feasible solution of (UQ) satisfying the inequality (\ref{inequa0}).
Define
\begin{eqnarray*}
\overline{x}&:=&\left\{\begin{array}{ll}s_{\overline{j}}/u_{\overline{j}},
& {\rm if}~a_{0}^Ts_{\overline{j}}/u_{\overline{j}}\leq 0,\\
-s_{\overline{j}}/u_{\overline{j}},
& {\rm otherwise},\end{array}\right.\\
\overline{\tau}&:=&\max\bigg\{\tau\in[0,1]:~f_i(\tau \overline{x})\le 0,~i=1,\ldots,p\bigg\}\\
&~=&\max\bigg\{\tau\in[0,1]:~\rho_i\|\tau\overline{x}-a_{i}\|\le 1,~i=1,\ldots,p\bigg\}.
\end{eqnarray*}
According to \eqref{16},
it holds that
\begin{eqnarray*}
\rho_i\|\overline{x}-a_{i}\|
&&\le~~ \max\left\{\rho_i\|s_{\overline{j}}/u_{\overline{j}}-a_{i}\|, \rho_i\|-(s_{\overline{j}}/u_{\overline{j}}-a_{i})-2a_{i}\| \right\}\\
&&\le~~ \max\left\{\sqrt{2}, \sqrt{2}+2\rho_i\|a_{i}\| \right\}\\
&&=~~\sqrt{2}+2\rho_i\|a_{i}\|.
\end{eqnarray*}
Therefore, for any $\tau\in[0,1]$, we obtain
\begin{eqnarray*}
\rho_i\|\tau\overline{x}-a_{i}\|
&&=~~\rho_i\|\tau(\overline{x}-a_{i})+(1-\tau)(-a_i)\|\\
&&\le~~ \tau\left(\sqrt{2}+2\rho_i\|a_{i}\|\right)
+(1-\tau)\rho_i\|a_{i}\|.
\end{eqnarray*}
Thus, for any $\tau\in\left[0,\frac{1-\rho_i\|a_{i}\|}{\sqrt{2}+\rho_i\|a_{i}\|}\right]$,
we have
\[
\rho_i\|\tau\overline{x}-a_{i}\|\leq 1,~i=1,\ldots,p,
\]
i.e.,
$\tau \bar{x}$ is feasible for (UQ).
According to the definition of $\overline{\tau}$, we obtain
\[
\overline{\tau}\geq \min_{i=1,\ldots,p}
\frac{1-\rho_i\|a_{i}\|}{\sqrt{2}+\rho_i\|a_{i}\|}=\frac{1-\max_{i=1,\ldots,p}\rho_i\|a_{i}\|}
{\sqrt{2}+\max_{i=1,\ldots,p}\rho_i\|a_{i}\|},
\]
where the equality holds true since $h(\gamma)=(1-\gamma)/(\sqrt{2}+\gamma)$ is a decreasing function over
$[0,1]$. Consequently, we have
%Now, we conclude that
\begin{align}
f_{0}(\overline{\tau}\overline{x}) =&\overline{\tau}^2\overline{x}^T\overline{x}-2\overline{\tau} a_{0}^T\overline{x}\nonumber\\
 \geq &\overline{\tau}^2\overline{x}^T\overline{x}-2\overline{\tau}^2 a_{0}^T\overline{x}\label{add0}\\
 \geq &\overline{\tau}^2\overline{x}^T\overline{x}-2\overline{\tau}^2 a_{0}^Ts_{\overline{j}}/u_{\overline{j}}\label{add1}\\
 =& \overline{\tau}^2({s_{\overline{j}}}^Ts_{\overline{j}}-2u_{\overline{j}}a_{0}^Ts_{\overline{j}})/{u_{\overline{j}}}^2\nonumber\\
 = &\overline{\tau}^2\cdot v({\rm LP})\label{add2},
\end{align}
where the inequality \eqref{add0} holds true since $a_0^T\overline{x}\le 0$ and $\overline{\tau}\geq \overline{\tau}^2$ (as $\overline{\tau}\in[0,1]$), the inequality \eqref{add1} is true as $a_0^T\overline{x}\leq a_{0}^Ts_{\overline{j}}/u_{\overline{j}}$,
and the equality \eqref{add2} follows from (\ref {n1}) and (\ref{n2}).
\end{proof}

\section{Standard quadratic programming  relaxation}
\subsection{A new and simple derivation}
The standard quadratic programming  relaxation for $({\rm CC_B})$ was first proposed by
Beck \cite{Be07}. First, by replacing the inner maximization problem with its Lagrangian dual (D-SDP), one obtains the following SDP problem:
\begin{eqnarray*}
({\rm SDP}(z))~~&\min ~~& y\\
&{\rm s.t.}& \left(\begin{array}{cc}(-1+\sum_{i=1}^p\lambda_{i})I &z-\sum_{i=1}^p\lambda_{i}a_{i}\\
z^T-\sum_{i=1}^p\lambda_{i}a_{i}^T&y+\sum_{i=1}^p\lambda_{i}(\|a_{i}\|^2-r_{i}^2)
\end{array}\right)\succeq 0,\\
&&\lambda_{i}\ge0,~i=1,\ldots,p.
\end{eqnarray*}
Then,  the minimax optimization problem $({\rm CC_B})$ is relaxed to a convex minimization problem:
\begin{eqnarray*}
({\rm DCC})~~&&\min_{z}~ v({\rm SDP}(z))+\|z\|^2 \\
&=&\min_{y,\lambda,z} ~ y+\|z\|^2\\
&&{\rm s.t.}~ \left(\begin{array}{cc}(-1+\sum_{i=1}^p\lambda_{i})I &z-\sum_{i=1}^p\lambda_{i}a_{i}\\
z^T-\sum_{i=1}^p\lambda_{i}a_{i}^T&y+\sum_{i=1}^p\lambda_{i}(\|a_{i}\|^2-r_{i}^2)
\end{array}\right)\succeq 0,\\
&&\lambda_{i}\ge 0,~i=1,\ldots,p.
\end{eqnarray*}
Based on a nontrivial analysis, Beck \cite{Be07}  showed that  (DCC) is further equivalent to the following standard convex quadratic programming problem (SQP):
\begin{eqnarray*}
({\rm SQP})~~&&\min_{\lambda} ~ \sum_{i=1}^p\lambda_{i}(r_{i}^2-\|a_{i}\|^2)+\left\|\sum_{i=1}^p\lambda_{i}a_{i}\right\|^2\\
&&{\rm s.t.}~
 \sum_{i=1}^p\lambda_{i}=1,~\lambda_{i}\ge 0,~i=1,\ldots,p.
\end{eqnarray*}
Let $\lambda^*$ be the optimal solution of (SQP). The approximate solution of ${\rm (CC_B)}$ is then given by
\begin{equation}
\overline{z}=\sum_{i=1}^p\lambda_i^*a_i \label{csol}
\end{equation}
so that we have
\begin{equation}
v({\rm SQP})= v({\rm SDP}(\overline{z}))+\|\overline{z}\|^2. \label{sqp:z}
\end{equation}
When $p\leq n$, according to Theorem \ref{Beck}, $v({\rm CC_B})=v({\rm DCC})=v({\rm SQP})$ and hence $\overline{z}$ is a global optimal solution of $({\rm CC_B})$ \cite{Be}.

In this section, we present a new and simpler derivation of (SQP).
We first  write the linear programming relaxation (as discussed in Section 3.2) of the inner maximization of (${\rm CC_B}$):
\begin{eqnarray*}
{\rm LP}(z)~~&\max& y-2z^Tx\\
&{\rm s.t.}&  y-2a_{i}^Tx+\|a_{i}\|^2\leq {r_{i}}^2,~i=1,\ldots,p,
%&& \left\|\left(\begin{array}{c} x\\ \frac{t-1}{2}\end{array}\right)\right\| \le  \frac{t+1}{2}.
\end{eqnarray*}
%Then the strong Lagrangian duality of ${\rm LP}(z)$ is:
which is equivalent to its  dual linear programming:
\begin{eqnarray*}
({\rm LPD}(z))~~&&\min_{\lambda}\sum_{i=1}^p\lambda_i(r_i^2-\|a_i\|^2)\\
&&{\rm s.t.} \sum_{i=1}^p\lambda_ia_i=z,~\sum_{i=1}^p\lambda_i=1, ~\lambda_i\geq 0,~i=1,\ldots,p.
\end{eqnarray*}
Therefore, we immediately re-obtain (${\rm SQP}$) as follows:
%Then $=$ we get the strong Lagrangian duality of $\min_{z}\left\{{\rm LP}(z)+\|z\|^2\right\}$ is:
\begin{eqnarray*}
v({\rm CC_B})&\le&\min_{z} \left\{v({\rm LP}(z))+\|z\|^2\right\}\\
&=&\min_{z}\left\{v({\rm LPD}(z))+\|z\|^2\right\}\\
&=&\min_{\lambda}\sum_{i=1}^p\lambda_i(r_i^2-\|a_i\|^2)+\left\|\sum_{i=1}^p\lambda_ia_i\right\|^2\\
&&{\rm s.t.} \sum_{i=1}^p\lambda_i=1, \lambda_i\geq 0,~i=1,\ldots,p.
\end{eqnarray*}
The above two different derivations imply the following interesting observation.
\begin{prop}\label{prop2}
Let $\overline{z}$ be  defined in (\ref{csol}). Then, we have
\begin{equation}
v({\rm SDP}(\overline{z}))=v({\rm LP}(\overline{z})).\label{lpsdp}
\end{equation}
\end{prop}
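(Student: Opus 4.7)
\medskip
\noindent\textbf{Proof proposal for Proposition \ref{prop2}.}

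The plan is to combine the two equivalent derivations of (SQP) so that both the SDP--based and the LP--based reductions are evaluated at the same $\overline{z}$ defined in \eqref{csol}, and then read off the claim by subtracting the common term $\|\overline{z}\|^2$. The first derivation, due to Beck, gives exactly equation \eqref{sqp:z}, namely
\[
v({\rm SQP})=v({\rm SDP}(\overline{z}))+\|\overline{z}\|^2.
\]
So the task reduces to proving the analogous identity
\[
v({\rm SQP})=v({\rm LP}(\overline{z}))+\|\overline{z}\|^2,
\]
from which the conclusion $v({\rm SDP}(\overline{z}))=v({\rm LP}(\overline{z}))$ follows immediately.

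To establish this second identity, I would trace through the new derivation more carefully. LP strong duality gives $v({\rm LP}(z))=v({\rm LPD}(z))$ for every $z$ for which the primal is bounded, and the change of variables $z=\sum_{i=1}^{p}\lambda_i a_i$ converts the joint optimization $\min_{z}\{v({\rm LPD}(z))+\|z\|^2\}$ into precisely $({\rm SQP})$. Consequently, the minimizer $\overline{z}$ of $\min_z \{v({\rm LP}(z))+\|z\|^2\}$ corresponds to the optimal $\lambda^\ast$ of $({\rm SQP})$ via $\overline{z}=\sum_{i=1}^{p}\lambda_i^\ast a_i$, and the optimal value is attained at this pair, i.e.
\[
v({\rm LP}(\overline{z}))+\|\overline{z}\|^2=v({\rm SQP}).
\]
Combining this with equation \eqref{sqp:z} and cancelling $\|\overline{z}\|^2$ yields the claim.

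The only subtle point — and the one I would check most carefully — is that the coupling $z=\sum_i\lambda_i a_i$ must be imposed as an equality constraint in the joint minimization, so that the $\overline{z}$ produced by $({\rm SQP})$ really is the argmin of $\min_{z}\{v({\rm LP}(z))+\|z\|^2\}$, rather than merely feeding an upper bound. Once this is in place, the rest is a one-line subtraction. No further analysis of the SDP side is needed, because \eqref{sqp:z} has already been established; the content of the proposition is that the LP relaxation of the inner maximization, which is weaker than the SDP relaxation in general, nevertheless becomes tight at the particular center $\overline{z}$ produced by (SQP).
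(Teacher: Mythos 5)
Your proposal is correct and matches the paper's own (implicit) argument: the paper justifies Proposition \ref{prop2} precisely by observing that Beck's derivation gives $v({\rm SQP})=v({\rm SDP}(\overline{z}))+\|\overline{z}\|^2$ while the LP-duality derivation gives $v({\rm SQP})=\min_z\{v({\rm LP}(z))+\|z\|^2\}$ attained at $\overline{z}=\sum_i\lambda_i^*a_i$, and subtracting $\|\overline{z}\|^2$ yields the claim. The "subtle point" you flag is indeed handled correctly, since the constraint $\sum_i\lambda_ia_i=z$ is already an equality constraint of $({\rm LPD}(z))$, so eliminating $z$ in the joint minimization is exact and $\overline{z}$ is a genuine argmin.
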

The following example shows that (\ref{lpsdp}) could be no longer true if $\overline{z}$ is replaced by any other $z$.
\begin{exam}\label{exam2}
Let $n=1$. Consider
\[
\min_z\max_{x^2+x-4\leq 0,x^2-x\leq 0}|x-z|^2.
\]
Solving the corresponding (SQP), we get $\overline{z}=\frac{1}{2}$.
According to Example \ref{exam1},
it is interesting to verify that
\[
v({\rm SDP}(z))<v({\rm LP}(z)),~\forall  z\neq \overline{z}.
\]
\end{exam}
%\begin{thm}[\cite{MMBB2006}]\label{thm:9}
%For every convex body $A \in \Bbb R^n$, there is a unique ball with minimal volume containing A. Its center belongs to A.
%\end{thm}
%In \cite{MMBB2006}, they showed a classical result about the center, here we make a supplementary.
%\begin{thm}[Necessary Condition]
%~\\
%1.
%%For the inner maximizing programming, we have
%$v({\rm LP(z^*)})=v({\rm SDP(z^*)})<+\infty$,\\
%2. $z^*\in conv\left\{a_1,\ldots, a_p\right\}\bigcap \Omega$, which follows from Lemma \ref{lem:1}.
%\end{thm}
%
%\begin{rem}
%The convex hull may not cover the original intersection.
%\end{rem}

\subsection{Approximation bound}
%For the hard case, solving (DCC) always yields a heuristic Chebyshev center $\overline{z}$.
(SQP) is efficient to solve and  provides a candidate  solution ($\overline{z}$ (\ref{csol})) to $({\rm CC_B})$.
However, to the best of our knowledge, the worst-case quality of the returned solution $\overline{z}$ remains unknown.
In this section, we answer this question. Moreover, the
first approximation ratio between $v({\rm CC_B})$ and $v({\rm SQP})$ is established.

\begin{thm}
Suppose ${\rm int}(\Omega)\neq \emptyset$, for the returned solution $\overline{z}$ (\ref{csol}), we have
\begin{eqnarray}
v({\rm SQP})\ge \max_{x\in \Omega}\|x-\overline{z}\|^2 \ge v({\rm CC_B})
\ge
\left(\frac{1-\gamma}{\sqrt{2}+\gamma}\right)^2\cdot v({\rm SQP}),\label{ratio}
\end{eqnarray}
where $\gamma(<1)$ is equal to (or larger than) the optimal objective value of the following convex programming problem:
\begin{equation}
\min_{x\in\Bbb R^n}\max_{i=1,\ldots,p}\frac{\|x-a_{i}\|}{r_{i}}. \label{gam0}
\end{equation}
Moreover,  let $d_{\max}=\max_{i,j=1,\ldots,p}\|a_{i}-a_{j}\|$, $r_{\min}=\min_{i=1,\ldots,p}r_i$ and suppose $d_{\max} <\sqrt{2}~ r_{\min}$, then (\ref{ratio}) holds with
\begin{equation}
\gamma =\frac{d_{\max} }{\sqrt{2}~ r_{\min}}.
\label{gam1}
\end{equation}
\end{thm}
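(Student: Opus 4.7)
The plan is to prove the three-link chain of inequalities, with the lower bound $v(\mathrm{CC_B})\ge h(\gamma)^2\,v(\mathrm{SQP})$ (writing $h(\gamma):=(1-\gamma)/(\sqrt{2}+\gamma)$) as the main content. The two upper inequalities are immediate: from the identity (\ref{sqp:z}) together with the fact that $\mathrm{SDP}(\overline{z})$ is a relaxation of the inner uniform quadratic problem at $\overline{z}$, I get $v(\mathrm{SQP})=v(\mathrm{SDP}(\overline{z}))+\|\overline{z}\|^2 \ge \max_{x\in\Omega}(x^Tx-2\overline{z}^Tx)+\|\overline{z}\|^2 = \max_{x\in\Omega}\|x-\overline{z}\|^2$; and $\max_{x\in\Omega}\|x-\overline{z}\|^2\ge v(\mathrm{CC_B})$ is just the $\min_z$ in the definition of $v(\mathrm{CC_B})$.

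For the main bound, I first translate coordinates so that a minimizer $x^\circ$ of (\ref{gam0}) sits at the origin. All quantities involved, $v(\mathrm{CC_B})$, $v(\mathrm{SQP})$, and the ratio $\min_x\max_i\|x-a_i\|/r_i$, are translation-invariant (for $v(\mathrm{SQP})$ this follows by direct expansion of $\sum_i\lambda_i(r_i^2-\|a_i-c\|^2)+\|\sum_i\lambda_i(a_i-c)\|^2$, whose dependence on $c$ cancels). In the new coordinates $\|a_i\|\le\gamma r_i<r_i$, so $0\in\mathrm{int}(\Omega)$ and Assumption \ref{ass2} holds. For any candidate center $z\in\mathbb{R}^n$, the inner maximization equals $v(\mathrm{UQ}_z)+\|z\|^2$, where $\mathrm{UQ}_z$ is the uniform quadratic problem with $a_0=z$ and $b_i=\|a_i\|^2-r_i^2$; its Theorem \ref{thm:4} constant is $\max_i\|a_i\|/\sqrt{-b_i+\|a_i\|^2}=\max_i\|a_i\|/r_i\le\gamma$, and since $h$ is decreasing on $[0,1]$ the construction in the proof of Theorem \ref{thm:4} yields a feasible $\overline{x}(z)\in\Omega$ with $f_0(\overline{x}(z))\ge h(\gamma)^2\,v(\mathrm{LP}_z)$. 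Therefore
\[
\max_{x\in\Omega}\|x-z\|^2 \ge \|\overline{x}(z)-z\|^2 = f_0(\overline{x}(z))+\|z\|^2 \ge h(\gamma)^2\,v(\mathrm{LP}_z)+\|z\|^2 \ge h(\gamma)^2\bigl(v(\mathrm{LP}_z)+\|z\|^2\bigr),
\]
the last inequality using $h(\gamma)\le 1$. Since $v(\mathrm{LP}_z)+\|z\|^2\ge v(\mathrm{SQP})$ for every $z$, taking $\min_z$ of both sides delivers $v(\mathrm{CC_B})\ge h(\gamma)^2\,v(\mathrm{SQP})$, completing the chain.

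For the final assertion I appeal to Jung's theorem: any finite subset of $\mathbb{R}^n$ of diameter $d$ is contained in a closed ball of radius at most $d/\sqrt{2}$. Applied to $\{a_1,\ldots,a_p\}$, this produces some $x^\circ$ with $\max_i\|x^\circ-a_i\|\le d_{\max}/\sqrt{2}$, so $\min_x\max_i\|x-a_i\|/r_i\le d_{\max}/(\sqrt{2}\,r_{\min})$. Because $h$ is decreasing, the ratio (\ref{ratio}) remains valid with this (possibly larger) value of $\gamma$, and the assumption $d_{\max}<\sqrt{2}\,r_{\min}$ ensures $\gamma<1$.

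I expect the main obstacle to be the pointwise use of Theorem \ref{thm:4} in the second step, whose stated hypothesis $v(\mathrm{SDP}_z)>v(\mathrm{UQ}_z)$ can fail at some $z$. When $v(\mathrm{LP}_z)=v(\mathrm{UQ}_z)$ (Case (a) of Theorem \ref{thm:sdp2}) the required inequality $f_0(\overline{x}(z))\ge h(\gamma)^2\,v(\mathrm{LP}_z)$ is immediate by taking $\overline{x}(z)$ to be a maximizer of $\mathrm{UQ}_z$; in Case (b), where $v(\mathrm{SDP}_z)=v(\mathrm{UQ}_z)<v(\mathrm{LP}_z)$, one needs to adapt the rescaling $\overline{\tau}\overline{x}$ of Theorem \ref{thm:4}'s proof so that the factor $h(\gamma)^2$ is still obtained against $v(\mathrm{LP}_z)$, or to reduce to the generic Case (c) via a small perturbation of the data.
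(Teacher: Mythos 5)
Your proposal is correct and follows essentially the same route as the paper: translate coordinates so that the point minimizing (\ref{gam0}) becomes an interior origin (the paper equivalently shifts to an arbitrary $x_0\in\mathrm{int}(\Omega)$ and then optimizes over $x_0$), apply the LP-based approximation bound of Theorem \ref{thm:4} to the inner problem at each $z$, combine with $v(\mathrm{SQP})=\min_z\{v(\mathrm{LP}(z))+\|z\|^2\}$ and Proposition \ref{prop2}, and use Jung's theorem (the paper cites Example 3.3.6 of \cite{DB}) to justify the choice (\ref{gam1}). The caveat you raise about invoking Theorem \ref{thm:4} at points $z$ where its hypothesis $v(\mathrm{SDP}(z))>v(\mathrm{UQ}(z))$ fails is a genuine subtlety, but the paper's own proof applies Theorem \ref{thm:4} for all $z$ without checking this either, so your treatment (Case (a) handled directly, Case (b) flagged) is if anything more careful than the published argument.
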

\begin{proof}
Let  $\overline{z}$ be defined in (\ref{csol}).
If $({\rm SDP}(\overline{z}))$ is tight for the inner maximization problem, then
$v({\rm CC_B})=v({\rm SQP})$ and there is nothing to prove.
%, then $v({\rm DCC})=v({\rm CC_B})$, $\hat{z}$ is the optimal solution. Otherwise, $v({\rm SDP(z)})=v({\rm LP(z)})$. Then we consider the problem in the following two cases.

We first suppose $0\in {\rm int}(\Omega)$.
According to Theorem \ref{thm:4},   we have
\begin{equation}
v({\rm LP}(z))\ge
 \max_{x\in \Omega}\{\|x\|^2-2x^Tz\}\ge \tau_0^2\cdot v({\rm LP}(z)),~\forall z\in\Bbb R^n,\label{z:1}
\end{equation}
where \[
\tau_0=\frac{1-\max_{i=1,\ldots,p}\frac{\|a_{i}\|}{r_{i}}}{\sqrt{2}+\max_{i=1,\ldots,p}\frac{\|a_{i}\|}
 {r_{i}}}.
 \]
Since $0\in {\rm int}(\Omega)$, we have $\|a_{i}\|<r_{i}$ for $i=1,\ldots,p$, which imply $\tau_0>0$. On the other hand, it is trivial to see $\tau_0<1$. Therefore, it holds that $\tau_0^2<1$ and then we obtain from (\ref{z:1}) that
\begin{eqnarray*}
\min_{z}\{v({\rm LP}(z))+\|z\|^2\}\ge \min_{z}\max_{x\in \Omega}\|x-z\|^2 \ge
\tau_0^2\min_{z}\{v({\rm LP}(z))+\|z\|^2\}.
\end{eqnarray*}
In view of the definition of $\overline{z}$, Proposition \ref{prop2}   and (\ref{sqp:z}), we have
\[
v({\rm SQP})= v({\rm LP}(\overline{z}))+\|\overline{z}\|^2 \ge v({\rm CC_B}) \ge
\tau_0^2 ( v({\rm LP}(\overline{z}))+\|\overline{z}\|^2) =\tau_0^2\cdot v({\rm SQP}).
\]
%\begin{equation}
% \ge v({\rm CC_B})
%\ge \tau^2\cdot v({\rm SQP}).\label{e:1}
%\end{equation}
Besides, the first inequality of (\ref{ratio}) follows from Proposition \ref{prop2} and the definition of $({\rm SDP}(\overline{z}))$, that is,
\[
v({\rm LP}(\overline{z}))+\|\overline{z}\|^2=v({\rm SDP}(\overline{z}))+\|\overline{z}\|^2 \ge \|x\|^2-2\overline{z}^Tx+\|\overline{z}\|^2,~\forall~x\in\Omega.
\]

Now, let $x_0$ be any interior point of $\Omega$ since ${\rm int}(\Omega)\neq \emptyset$, i.e., $ \|x_0-a_{i}\|< r_{i}$, $i=1,\ldots, p$. %If $0\notin\Omega$, we can make a linear transformation.  Define
Consider
\begin{eqnarray*}
({\rm CC_B'})~~\min_{z}\max_{\widetilde{x}\in \Omega(x_0)}\|\widetilde{x}-(z-x_0)\|^2,
\end{eqnarray*}
where
$\Omega(x_0)=\{\widetilde{x}\in \Bbb R^n:~ \|\widetilde{x}-(a_{i}-x_0)\|^2\leq r_{i}^2, ~i=1,\ldots, p\}$. Then it is trivial to see that $0\in {\rm int}(\Omega(x_0))$ and $v({\rm CC_B})=v({\rm CC_B'})$.

We write the linear programming relaxation for the inner maximization problem of $({\rm CC_B'})$ as follows:
\begin{eqnarray*}
{\rm LP'}(z)~~&\max& \widetilde{y}-2(z-x_0)^T\widetilde{x}\\
&{\rm s.t.}&  \widetilde{y}-2(a_{i}-x_0)^T\widetilde{x}+\|a_{i}-x_0\|^2\leq {r_{i}}^2,~i=1,\ldots,p.
\end{eqnarray*}
Let $(x^*,y^*)$ be an optimal solution of ${\rm LP}(z)$. Define
\[
\widetilde{x}:=x^*-x_0,~\widetilde{y}:=y^*-2x_0^Tx^*+x_0^Tx_0.
\]
We can verify  that $(\widetilde{x},\widetilde{y})$ is a feasible solution of ${\rm LP'}(z)$ and hence
\begin{equation}
v({\rm LP'}(z))\geq \widetilde{y}-2(z-x_0)^T\widetilde{x}
=v({\rm LP}(z))+\|z\|^2-\|z-x_0\|^2.\label{e:4}
\end{equation}
On the other hand,
let $(\widetilde{x}^*,\widetilde{y}^*)$ be an optimal solution of ${\rm LP'}(z)$. Define
\[
x:=\widetilde{x}^*+x_0,~y:=\widetilde{y}^*+2x_0^T\widetilde{x}^*+x_0^Tx_0.
\]
We can verify  that $(x,y)$ is a feasible solution of ${\rm LP}(z)$ and hence
\begin{equation}
v({\rm LP}(z))\geq y-2z^Tx
=v({\rm LP'}(z))-\|z\|^2+\|z-x_0\|^2.\label{e:5}
\end{equation}
Combining (\ref{e:4}) and (\ref{e:5}) yields
\[
v({\rm LP}(z))+\|z\|^2
=v({\rm LP'}(z))+\|z-x_0\|^2. 
\]
%Denote by ${\rm SDP'}(z)$ the SDP relaxation  for the inner maximization problem of $({\rm CC_B'})$,
%we have $v({\rm SDP'}(z))=v({\rm LP'}(z))$. Then, the equation (\ref{e:6}) becomes
%\begin{equation}
%v({\rm SDP}(z))+\|z\|^2
%=v({\rm SDP'}(z))+\|z-x_0\|^2.\label{e:7}
%\end{equation}
Therefore, we obtain
\begin{equation}
v({\rm SQP})=\min_z\{v({\rm LP}(z)+\|z\|^2\}=\min_z\{v({\rm LP'}(z)+\|z-x_0\|^2\}=v({\rm SQP'}).
\end{equation}

%Similar to $({\rm DCC})$, we define
%\[
%({\rm DCC'})~~\min_{z}~ v({\rm SDP'}(z))+\|z-x_0\|^2.
%\]
%Then the equation (\ref{e:7}) implies that
%\[
%v({\rm DCC'})=v({\rm DCC}).
%\]
Since $0\in {\rm int}(\Omega(x_0))$, according to the first part of this proof, we have
\[
v({\rm SQP'})\ge \max_{\widetilde{x}\in \Omega(x_0)}\|\widetilde{x}-(\widetilde{z}-x_0)\|^2 \ge v({\rm CC_B'})\ge \tau(x_0)^2\cdot v({\rm SQP'}),
\]
where $\widetilde{z}=\sum_{i=1}^p\lambda_i^*(a_i-x_0)$, $\lambda^*$ is an optimal solution of $({\rm SQP'})$ and
\[
\tau(x_0)=\frac{1-\max_{i=1,\ldots,p}\frac{\|x_0-a_{i}\|}{r_{i}}}{\sqrt{2}+\max_{i=1,\ldots,p}\frac{\|x_0-a_{i}\|}
 {r_{i}}}.
\]
Therefore, it holds that
\begin{equation}
v({\rm SQP})\ge \max_{x\in \Omega}\|x-\widetilde{z}\|^2 \ge v({\rm CC_B})\ge \tau(x_0)^2\cdot v({\rm SQP}). \label{e:m}
\end{equation}

Since the inequality (\ref{e:m}) holds for arbitrary $x_0\in {\rm int}(\Omega)$, the best choice of  $x_0$ is to maximize the lower bound $\tau(x_0)$ in ${\rm int}(\Omega)$.  That is, we can select
\[
\tau(x_0) = \frac{1-\gamma}{\sqrt{2}+\gamma}
\]
with any $\gamma<1$ satisfying
\begin{eqnarray}
\gamma \ge \inf_{x_0\in {\rm int}(\Omega)}\max_{i=1,\ldots,p}\frac{\|x_0-a_{i}\|}{r_{i}}=\min_{x_0\in \Omega}\max_{i=1,\ldots,p}\frac{\|x_0-a_{i}\|}{r_{i}}.\label{gam}
%&=&\min ~~t \nonumber \\
%&&{\rm s.t.}~t\ge \frac{\|x_0-a_{i}\|^2}{r_{i}^2},~i=1,\ldots,p,\nonumber\\
%&&~~~~~~x_0\in {\rm int}\Omega.\nonumber
\end{eqnarray}

Finally, we show that (\ref{gam1}) is a feasible choice of $\gamma$.
According to the definition of $r_{\min}$, we have
\begin{eqnarray*}
\min_{x_0\in \Omega}\max_{i=1,\ldots,p}\frac{\|x_0-a_{i}\|}{r_{i}} &\le& \min_{x_0\in \Bbb R^n}\max_{i=1,\ldots,p}\frac{\|x_0-a_{i}\|}{r_{\min}} \\
&=&
\frac{1}{r_{\min}}\cdot\min_{x_0\in \Bbb R^n}\max_{i=1,\ldots,p}\|x_0-a_{i}\|\\
&\le& \frac{1}{r_{\min}}\cdot\min_{x_0\in \Bbb R^n}\max_{a\in {\rm conv}\{a_{1},\cdots,a_{p}\}}\|x_0-a\|,
\end{eqnarray*}
where the last inequality holds since the optimal solution of the inner convex maximization problem in terms of $a$ is attained at one of the vertices $a_{1},\ldots,a_{p}$.
According to Example 3.3.6 in \cite{DB}, we have
\[
\min_{x_0\in \Bbb R^n}\max_{a\in {\rm conv}\{a_{1},\cdots,a_{p}\}}\|x_0-a\| \le d_{\max}\sqrt{\frac{n}{2(n+1)}}<\frac{d_{\max}}{\sqrt{2}}.
\]
Consequently, if $d_{\max} <\sqrt{2}~ r_{\min}$, $\gamma$ can be set as in (\ref{gam1}).
\end{proof}

\section{More polynomially solvable cases}%????Global optimization for $({\rm CC_B})$
\subsection{Polynomially solvable cases for uniform quadratic optimization}
We first reformulate (UQ) as the following shifted version:
\begin{eqnarray*}
{\rm (UQ')}~~&\max& \|x-a_0\|^2-\|a_0\|^2\\
&{\rm s.t.}& \|x-a_0\|^2-2(a_i-a_0)^Tx +b_i-a_0^Ta_0\leq 0,~i=1,\ldots,p.
\end{eqnarray*}
The corresponding linear programming relaxation becomes
\begin{eqnarray*}
{\rm (LP')}~~&\max& z-\|a_0\|^2\\
&{\rm s.t.}& z-2(a_i-a_0)^Tx +b_i-a_0^Ta_0\leq 0,~i=1,\ldots,p.
\end{eqnarray*}
According to Remark \ref{rema}, the (SDP) relaxation is equivalent to the following (SOCP):
\begin{eqnarray}
{\rm (SOCP')}~~&\max& z-\|a_0\|^2\nonumber\\
&{\rm s.t.}& z-2(a_i-a_0)^Tx+b_i-a_0^Ta_0\leq 0,~i=1,\ldots,p,\nonumber\\
&& \left\|\left(\begin{array}{c}x-a_0\\ \frac{z-1}{2}\end{array}\right)\right\|\le  \frac{z+1}{2}.\label{con:socp}%x^TQx\leq t
\end{eqnarray}

We first solve $({\rm LP'})$. If $v({\rm LP'})=+\infty$, according to Theorem \ref{thm:sdp1}, $v({\rm SDP'})=v({\rm UQ'})$ and hence (UQ$'$) can be globally solved by (SOCP$'$). Otherwise, let $(x^*, z^*)$ be a returned optimal solution of $({\rm LP'})$. If $z^*=\|x^{*}-a_0\|^2$, $(x^*, z^*)$ is also an optimal solution of (UQ$'$).

Suppose $z^*< \|x^{*}-a_0\|^2$,
according to Theorem \ref{thm:sdp2}, $v({\rm SDP'})=v({\rm UQ'})$. Then, (UQ$'$) can be globally solved by (SOCP$'$).

Now, we assume that
\begin{equation}
z^*> \|x^{*}-a_0\|^2.\label{zx}
\end{equation}
Then we have the following reformulation. %It holds that $v({\rm LP}')=v({\rm SDP}')$.
\begin{lem}\label{lem:2}
Suppose  $v({\rm LP'})<+\infty$. Let $(x^*, z^*)$ be an optimal solution of $({\rm LP'})$ and
 the assumption (\ref{zx}) holds. Then, (UQ$'$) is equivalent to the following nonconvex optimization problem:
\begin{eqnarray}
    {\rm (UQ'')}~~&\max& z-\|a_0\|^2\label{ineq:-1}\\
    &{\rm s.t.}& z-2(a_i-a_0)^Tx +b_i-a_0^Ta_0\leq 0,~i=1,\ldots,p,\label{ineq:0}\\
    &&z\leq \|x-a_0\|^2.\label{ineq:1}
\end{eqnarray}
\end{lem}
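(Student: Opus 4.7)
My plan is to establish equivalence between (UQ$'$) and (UQ$''$) by showing the two optimal values coincide. Introduce the shorthands $\ell_i(x):=2(a_i-a_0)^Tx-b_i+a_0^Ta_0$, $g(x):=\min_{1\le i\le p}\ell_i(x)$ (concave as a pointwise minimum of affine functions), and $h(x):=\|x-a_0\|^2$ (convex). Then (UQ$'$) reads as $\max\{h(x)-\|a_0\|^2:h(x)\le g(x)\}$, while (UQ$''$) reads as $\max_{x\in\Bbb R^n}\bigl(\min\{g(x),h(x)\}-\|a_0\|^2\bigr)$ because, for each fixed $x$, the largest admissible $z$ in (UQ$''$) is $\min\{g(x),h(x)\}$. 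In the same spirit, (LP$'$) collapses to $\max_x g(x)-\|a_0\|^2$, so the optimal $(x^*,z^*)$ of (LP$'$) in the hypothesis satisfies $z^*=g(x^*)=\max_x g(x)$, and assumption (\ref{zx}) amounts to $g(x^*)>h(x^*)$.

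The inequality $v({\rm UQ}')\le v({\rm UQ}'')$ is immediate: for any $\hat x$ feasible for (UQ$'$) the pair $(\hat x,h(\hat x))$ is feasible for (UQ$''$) with the same objective value.

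For the reverse direction I would take an optimal solution $(\bar x,\bar z)$ of (UQ$''$), where optimality forces $\bar z=\min\{g(\bar x),h(\bar x)\}$ (else $\bar z$ can be raised). If $\bar z=h(\bar x)$ then $h(\bar x)\le g(\bar x)$, i.e.\ $\bar x$ is feasible for (UQ$'$) with matching objective, and we are done. Otherwise $\bar z<h(\bar x)$, which forces $\bar z=g(\bar x)$, and the strategy is to deform $\bar x$ to a point on the boundary $\{g=h\}$ along the segment $x(t):=(1-t)\bar x+tx^*$, $t\in[0,1]$. The continuous scalar function $g(x(t))-h(x(t))$ is $\le 0$ at $t=0$ and, by (\ref{zx}), strictly positive at $t=1$, so the intermediate value theorem delivers $t_0\in(0,1]$ with $g(x(t_0))=h(x(t_0))$. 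Setting $x_0:=x(t_0)$, the equality $h(x_0)=g(x_0)\le\ell_i(x_0)$ yields feasibility of $x_0$ for (UQ$'$); meanwhile the concavity of $g$ together with $g(x^*)\ge g(\bar x)$ gives $g(x(t))\ge(1-t)g(\bar x)+tg(x^*)\ge g(\bar x)=\bar z$ for every $t$. Thus $h(x_0)=g(x_0)\ge\bar z$, and the objective of (UQ$'$) at $x_0$ is at least $\bar z-\|a_0\|^2=v({\rm UQ}'')$, completing the proof.

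The main obstacle is the case $\bar z<h(\bar x)$, where feasibility for (UQ$'$) need not hold at $\bar x$ and the hypothesis (\ref{zx}) must be invoked. The two crucial ingredients are (i) interpreting $x^*$ as a global maximizer of the concave function $g$ with $g(x^*)>h(x^*)$, which both anchors an endpoint of the deformation and supplies the strict sign change needed for the intermediate value theorem, and (ii) the concavity of $g$ along the segment, which prevents $g$ from dipping below $g(\bar x)$ and thereby guarantees that the boundary point $x_0$ inherits an objective value at least $\bar z$. Without (\ref{zx}) the witness $x^*$ may fail to exist and the construction collapses.
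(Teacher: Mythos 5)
Your proof is correct and follows essentially the same route as the paper's: both split on whether the quadratic constraint (\ref{ineq:1}) is active at a maximizer of (UQ$''$) and, in the inactive case, apply the intermediate value theorem along the segment joining that maximizer to the (LP$'$) maximizer $x^*$, with assumption (\ref{zx}) supplying the sign change. The only cosmetic differences are that you package the interpolation via concavity of $g=\min_i\ell_i$ rather than fixing $z=z^*$ along the segment (the paper first argues $\bar z=z^*$ by noting a local optimum of the convex problem (LP$'$) is global), and that you assert rather than justify attainment of the (UQ$''$) optimum, which the paper at least remarks on.
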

\begin{proof}
Since $v{\rm (UQ'')}\le v({\rm LP}')<+\infty$, and the feasible region of ${\rm (UQ'')}$ is nonempty and closed, ${\rm (UQ'')}$ has an attained optimal solution, denoted by $(x'',z'')$. If (\ref{ineq:1}) is active at $(x'',z'')$, then clearly $v{\rm (UQ'')}=v{\rm (UQ')}$. Otherwise,
we have $z''< \|x''-a_0\|^2$. It implies that $(x'',z'')$ is a local optimal solution of (\ref{ineq:-1})-(\ref{ineq:0}), which is also a local optimal solution of $({\rm LP}')$. Therefore, $(x'',z'')$ is an optimal solution of $({\rm LP}')$. It follows that $v({\rm LP}')=z''-\|a_0\|^2=z^*-\|a_0\|^2$, that is, $z''=z^*$. Define
\[
h(\alpha)=z^*-\|\alpha x''+(1-\alpha)x^*-a_0\|^2.
\]
Since $h(0)>0>h(1)$ and $h(\alpha)$ is continuous with respect to $\alpha$, there is an $\alpha^*\in(0,1)$ such that $h(\alpha^*)=0$ and $x(\alpha^*)=\alpha^* x''+(1-\alpha^*)x^*$ satisfies  (\ref{ineq:0}). Consequently, $(x(\alpha^*),z^*)$ is an optimal solution of  (UQ$''$). It follows that $v{\rm (UQ'')}=v{\rm (UQ')}$.
\end{proof}

%If $v({\rm LP}')>v({\rm UQ}')$, Otherwise, $v({\rm LP'})=v({\rm UQ}')$, then $z^*$ is already optimal since $z^*-\|a_0\|^2=v{\rm (LP')}=v({\rm UQ}')$. Consequently,
%there must exist a feasible solution $\widetilde{x}$ such that $z^*=\|\widetilde{x}-a_0\|^2$.

%  Then the problem $({\rm UQ})$ can be solved by solving its $({\rm SDP})$ or $({\rm SOCP})$.
%  $z^*>x^{*T}x^*$. Then $+\infty > v({\rm LP})=v({\rm SDP})>v({\rm UQ})$. This means that the constraint $z\leq x^Tx$ should be active in order to get the global optimal solution to $({\rm UQ})$. In order to get the global optimal solution to ({\rm UQ}), we need to solve the following problem:

\begin{lem}\label{lem:3}
Under the same assumption as in Lemma \ref{lem:2}, there is an optimal solution of (UQ$'$) at which (\ref{ineq:1}) is active and
   \begin{equation}
   {\rm rank}\left[a_j-a_0 ~(j\in J)\right]= n\label{rank}
   \end{equation}
holds with $J$ being the index set of all active constraints (\ref{ineq:0}).
\end{lem}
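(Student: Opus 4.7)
The plan is to argue by contradiction using a perturbation in the orthogonal complement of $\mathrm{span}\{a_j-a_0:j\in J\}$. By Lemma \ref{lem:2}, an optimum $(\bar x, z^\circ)$ of (UQ$''$) at which (\ref{ineq:1}) is active is already at hand, so it suffices to show that among all such optima one may always be chosen whose active set $J$ makes the vectors $\{a_j-a_0\}_{j\in J}$ span $\mathbb{R}^n$.

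Among all optima of (UQ$''$) with (\ref{ineq:1}) active, pick $(x^\circ, z^\circ)$ that maximizes the cardinality $|J|$ of the active index set of (\ref{ineq:0}), and set $k=\mathrm{rank}\{a_j-a_0:j\in J\}$. Assume for contradiction $k<n$ and choose a nonzero $v\in V:=\mathrm{span}\{a_j-a_0:j\in J\}^\perp$. Because $(a_j-a_0)^Tv=0$ for every $j\in J$, a direct expansion gives the uniform identity
\[
\|x^\circ+tv-a_j\|^2-r_j^2 = \|x^\circ+tv-a_0\|^2-z^\circ = 2tc+t^2\|v\|^2,\qquad j\in J,
\]
where $c:=(x^\circ-a_0)^Tv$. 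Thus the whole family of $J$-constraints and (\ref{ineq:1}) slide in lockstep along the line $x(t)=x^\circ+tv$; when $c\ne 0$, the nonzero root $t^*:=-2c/\|v\|^2$ yields a second point $x(t^*)$ still on the sphere $\{\|x-a_0\|^2=z^\circ\}$ with the $J$-constraints active.

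The core of the argument is to track the inactive constraints along the segment from $0$ to $t^*$. The quantity $(a_{j^*}-a_0)^Tv$ is precisely what governs the motion of constraint $j^*\notin J$, so by choosing $v$ generically in $V$ at least one inactive constraint is forced to become active at some smallest $|t_1|\in(0,|t^*|]$; moreover, $(a_{j^*}-a_0)^Tv\ne 0$ for that $j^*$ gives $a_{j^*}-a_0\notin W:=V^\perp$, strictly enlarging the rank of the active set. If $t_1=t^*$, then $(x(t^*),z^\circ)$ is immediately a new optimum of (UQ$''$) with (\ref{ineq:1}) active and active set $J\cup\{j^*\}$, contradicting the maximality of $|J|$. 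If $t_1\in(0,t^*)$, then $x(t_1)$ lies in $\Omega$ with active set $J\cup\{j^*\}$ but $\|x(t_1)-a_0\|^2<z^\circ$; we then mimic the convex-combination construction of Lemma \ref{lem:2}, with $x(t_1)$ as the feasible reference and a suitably chosen second point (infeasible for (UQ$''$) with $\|\cdot-a_0\|^2>z^\circ$) to land back on the sphere while preserving every constraint active at both endpoints. Either way, $|J|$ strictly grows.

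The main obstacle will be the degenerate situation $c=0$ for every $v\in V$, equivalently $x^\circ-a_0\in W$; in this case straight-line perturbations in $V$ immediately violate the $J$-constraints, so no line motion preserves optimality. The contradiction must then be extracted from the first-order optimality condition $\mu_0(x^\circ-a_0)+\sum_{j\in J}\mu_j(a_j-a_0)=0$ with $\mu_0+\sum_{j\in J}\mu_j=1$ and $\mu_j,\mu_0\ge 0$: decomposing (UQ$''$) along $W$ reduces the problem to a lower-dimensional instance on the affine hull $a_0+W$, to which one applies the induction hypothesis, and the remaining $V$-degrees of freedom are handled by exploiting $v({\rm LP}')<+\infty$ (whence $a_0\in\mathrm{conv}\{a_1,\ldots,a_p\}$ by Lemma \ref{lem:1}) together with the standing assumption $z^*>\|x^*-a_0\|^2$ of (\ref{zx}), which together rule out the pathological scenario in which a continuum of $V$-perturbations would leave (UQ$'$) without activating any new constraint.
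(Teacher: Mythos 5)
Your perturbation direction $v\in\{a_j-a_0:j\in J\}^{\perp}$ is the same starting point as the paper's, but the arguments then diverge, and yours has gaps I do not see how to close. The paper splits on whether $v(\mathrm{SDP}')>v(\mathrm{UQ}')$. When it does, the perturbed point $(\widetilde{x}+\epsilon v,\widetilde{z})$ (with $v^T(\widetilde{x}-a_0)\ge 0$) keeps every constraint (\ref{ineq:0}) satisfied with the same active set, keeps the objective value, and makes (\ref{ineq:1}) \emph{strictly slack}; hence it is a local, therefore global, maximizer of $(\mathrm{LP}')$ at which $z<\|x-a_0\|^2$, and Theorem \ref{thm:sdp2}(b) forces $v(\mathrm{SDP}')=v(\mathrm{UQ}')$, a contradiction. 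This mechanism --- converting slackness of (\ref{ineq:1}) into LP-optimality and then invoking Theorem \ref{thm:sdp2} --- is absent from your proposal; you instead try to strictly enlarge $J$, which requires some inactive $j^*$ with $(a_{j^*}-a_0)^Tv\neq 0$. Your claim that the hypotheses rule out the contrary case is unfounded: take $n=2$, $a_0=0$, $a_1=(1,0)$, $a_2=(-1,0)$, $b_1=b_2=-3$. Then $a_0\in\mathrm{conv}\{a_1,a_2\}$, $v(\mathrm{LP}')=3$, an LP solver may return $x^*=0$ so that (\ref{zx}) holds, yet all $a_i-a_0$ lie on a line, no $v$ in its orthogonal complement ever activates a new constraint, and every optimal solution of $(\mathrm{UQ}')$ (namely $(0,\pm\sqrt{3})$, since summing the two constraints gives $\|x\|^2\le 3$) has an active set of rank $1<n$. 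So the ``pathological scenario'' you defer to an uncarried-out induction genuinely occurs under the stated assumptions and defeats the maximal-$|J|$ strategy outright. (This configuration also puts pressure on the paper's own treatment of the case $v(\mathrm{SDP}')=v(\mathrm{UQ}')$, where the strictly-convex-maximization-over-a-polytope step tacitly needs the polytope to have extreme points; but at a minimum it shows your route cannot be completed as sketched.)

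Two further local problems. First, the case you call degenerate, $c=(x^\circ-a_0)^Tv=0$, is in fact the harmless one: the constraints (\ref{ineq:0}) indexed by $J$ are linear in $(x,z)$ and depend on $x$ only through $(a_j-a_0)^Tx$, so they are unchanged along $x^\circ+tv$, while $\|x^\circ+tv-a_0\|^2-z^\circ=t^2\|v\|^2>0$ makes (\ref{ineq:1}) slack; nothing is ``immediately violated,'' and this is exactly the situation the paper's argument exploits. Second, in the sub-case $t_1\in(0,t^*)$ the quadratic $2tc+t^2\|v\|^2$ is negative strictly between its roots, so $(x(t_1),z^\circ)$ violates (\ref{ineq:1}) and the original quadratic constraints indexed by $J$ are then strictly inactive at $x(t_1)$; the convex-combination repair leaves the line, need not preserve activity of $J\cup\{j^*\}$, and no second endpoint with the required active set is exhibited. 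As written, the proposal does not establish the lemma.
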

\begin{proof}
We first assume
\begin{equation}
v({\rm SDP'})>v({\rm UQ'}).\label{sduq}
\end{equation}
According to Lemma \ref{lem:2}, there is an optimal solution of ${\rm (UQ'')}$, denoted by $(\widetilde{x},\widetilde{z})$, such that $\widetilde{z}=\|\widetilde{x}-a_0\|^2$.
Let $J$ be the index set of active inequalities (\ref{ineq:0}) at $(\widetilde{x},\widetilde{z})$. Without loss of generality, we assume $J=\{1,\cdots,k\}$.
Suppose (\ref{rank}) does not hold, then
   \begin{equation}
   {\rm rank}\left[a_1-a_0,\cdots,a_k-a_0\right]\leq n-1.\label{rk}
   \end{equation}
There exists a vector $v\neq 0$ such that
    \[
    v^T(a_i-a_0)=0, ~i=1,\ldots,k.
    \]
We can further assume
\begin{equation}
v^T(\widetilde{x}-a_0)\ge 0,\label{v:1}
\end{equation}
otherwise, let $v:=-v$.
Then, for any sufficiently small $\epsilon>0$,   $(\widetilde{x}+\epsilon v,\widetilde{z})$ satisfies (\ref{ineq:0}) and the corresponding index set of active constraints is still $J$. Moreover, according to (\ref{v:1}), we have
\begin{equation}
\widetilde{z}=\|\widetilde{x}-a_0\|^2<\|\widetilde{x}-a_0\|^2+2\epsilon v^T(\widetilde{x}-a_0)+\epsilon^2 \|v\|^2
=\|\widetilde{x}+\epsilon v-a_0\|^2.\label{v:2}
\end{equation}
Therefore, $(\widetilde{x}+\epsilon v,\widetilde{z})$ remains an optimal solution of  (UQ$''$). Moreover, according to (\ref{v:2}),
$(\widetilde{x}+\epsilon v,\widetilde{z})$ is a local optimal solution of (\ref{ineq:-1})-(\ref{ineq:0}), which is also a local optimal solution of $({\rm LP}')$. Therefore, $(\widetilde{x}+\epsilon v,\widetilde{z})$ is also an optimal solution of $({\rm LP}')$. It follows from Theorem \ref{thm:sdp2} that $v({\rm SDP'})=v({\rm UQ'})$, which contradicts the assumption (\ref{sduq}).

Now, we consider the case that the assumption (\ref{sduq}) does not hold. Then, we have $v({\rm SOCP'})=v({\rm SDP'})=v({\rm UQ'})$. %The assumption (\ref{zx}) is only necessary rather than sufficient.
Notice that under assumption (\ref{zx}), Theorem \ref{thm:sdp2} implies that $v({\rm SDP'})=v({\rm LP'})$. It turns out that  $(x^*,z^*)$, the optimal solution of $({\rm LP'})$, remains an optimal solution of  $({\rm SOCP'})$. For any other (if exists) optimal solution of $({\rm SOCP'})$, denoted by $(\widetilde{x},\widetilde{z})$, we have
\[
v({\rm SOCP'})=z^*-\|a_0\|^2=\widetilde{z}-\|a_0\|^2.
\]
Then, $z^*=\widetilde{z}$ and the set of optimal solutions of $({\rm SOCP'})$ is characterized as
\begin{equation}
\{(x,z^*): z^*-2(a_i-a_0)^Tx+b_i-a_0^Ta_0\leq 0,~i=1,\ldots,p,~ \|x-a_0\|^2 \le z^*\}.
\end{equation}
It follows that  $v({\rm SOCP'})=v({\rm UQ'})$ if and only if
\begin{eqnarray}
z^*= &\max&\|x-a_0\|^2 \label{max:1}\\
&{\rm s.t.}& z^*-2(a_i-a_0)^Tx+b_i-a_0^Ta_0\leq 0,~i=1,\ldots,p.\label{max:2}
\end{eqnarray}
Since the problem (\ref{max:1})-(\ref{max:2}) is a strictly convex maximization over a polytope, its optimal solution must be an extreme point, i.e., there are $n$ active constraints in (\ref{max:2})  such that their coefficient vectors in terms of $x$ are linearly independent. The proof is complete.
\end{proof}

According to Lemma \ref{lem:3}, it is sufficient to enumerate all sets of $n$ indices  of the constraints (\ref{ineq:0}) with   linearly independent coefficient vectors.
For each such selection, letting the $n$ constraints be active yields the linear equations
\[
ze-Ax+b-(a_0^Ta_0)e=0,
\]
where $e=(1,\cdots,1)^T\in\Bbb R^n$, $A$ is an $n\times n$ matrix composed by the $n$ linearly
independent coefficient vectors and $b$ is the corresponding $n$-dimensional vector $(b_i)$.  Since $A$  is invertible, we have
\begin{equation}
x(z)=A^{-1}(b-(a_0^Ta_0)e+ze).\label{xA}
\end{equation}
Substituting (\ref{xA}) into $z=\|x-a_0\|^2$, as (\ref{ineq:1}) is active, we obtain a univariate quadratic equation in terms of $z$:
\[
z= \|A^{-1}b-(a_0^Ta_0)A^{-1}e+zA^{-1}e-a_0\|^2,
\]
which has at most two explicit real solutions $z_1\ge z_2$. If $x(z_1)$ (\ref{xA}) satisfies all the other inactive constraints (\ref{ineq:1}), then $x(z_1)$ is a candidate optimal solution of (UQ$'$). Otherwise, if $x(z_2)$ (\ref{xA}) is feasible, add $x(z_2)$ to the candidate solution list. Suppose neither $x(z_1)$ nor $x(z_2)$ is feasible, we conclude that the current selection of the $n$ linearly independent constraints cannot provide an optimal solution of (UQ$'$). Totally, there are at most $\left(p\atop n\right)=\frac{p!}{n!(p-n)!}$ candidate solutions and $x(z)$ with the maximal $z$ is the exact optimal solution of (UQ$'$).

As a summary, we have the following result.

%    There are $\left(p\atop n\right)=\frac{p!}{n!(p-n)!}$ enumerations of the maximal linearly independent system for the constraints. Pick n constraints from ($\ref{ineq:0}$),
%     denoted by $z-2(a_{\tilde{i}}-a_0)^Tx+b_{\tilde{i}}-a_0^Ta_0\leq 0$ satisfying rank$\left\{a_{\tilde{i}}-a_0\right\}=n$. We have $ze+Ax+b=0$, where $A=\left\{a_{\tilde{i}}\right\}\in R^{n\times n}$ is invertible, $b=\left\{b_{\bar{i}}\right\}\in R^{n\times 1}$, $e$ is the all one vector. Therefore $x=A^{-1}(-b-ze)$. Since $z=x^Tx\geq 0$, we have $(b+ze)^TA^{-2}(b+ze)=z$, i.e., $(e^TA^{-2}e)z^2+2(e^TA^{-2}b-1)z+b^TA^{-2}b=0$.
%By solving the equation, we have the following cases:
%If $\Delta=4(e^TA^{-2}b-1)^2-4(e^TA^{-2}e)(b^TA^{-2}b)\geq 0$, we can get the solution
%$z=2(e^TA^2e)*(2(1-e^TA^{-2}b)\pm \sqrt{\Delta})$.
%If $z\geq 0$, we should record the solution and the corresponding value in the set $Z$.
%Finally, the optimal value to (${\rm CC_B}$) is the maximal value of $Z$, and at the same time, we can get the optimal solution.

\begin{thm}\label{pol}
The worst-case complexity for solving the problem $({\rm UQ})$ is
\begin{equation}
T(n,p):=
{\rm LP}(n+1, p)+\max\left\{{\rm SOCP}(n+1, p), \left(p\atop n\right)\cdot O(n^3)\right\},\label{cpx}
\end{equation}
where ${\rm LP}(n+1, p)$ is the time complexity for solving a linear programming problem in $(n+1)$-dimension with $p$ constraints and ${\rm SOCP}(n+1, p)$ is the time for solving a second-order cone programming problem in $(n+1)$-dimension with one second-order conic constraint and $p$ linear constraints.
\end{thm}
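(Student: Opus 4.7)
The plan is to formalize the three-case algorithm sketched in the paragraphs preceding the theorem and to track the cost of each branch.

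First, I would solve $({\rm LP'})$, which has $n+1$ decision variables (the pair $(x,z)\in\mathbb{R}^n\times\mathbb{R}$) and exactly $p$ linear inequalities; this accounts for the leading term ${\rm LP}(n+1,p)$ in \eqref{cpx}. Based on the output, I branch. If $v({\rm LP'})=+\infty$, then Theorem \ref{thm:sdp1} forces $v({\rm SDP'})=v({\rm UQ'})$, so solving the equivalent $({\rm SOCP'})$ (which has the same $n+1$ variables, $p$ linear constraints and one second-order cone constraint \eqref{con:socp}) yields an optimal solution of $({\rm UQ'})$ in time ${\rm SOCP}(n+1,p)$. Otherwise $({\rm LP'})$ returns an optimal $(x^*,z^*)$; compare $z^*$ with $\|x^*-a_0\|^2$. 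If the two are equal, $(x^*,z^*)$ is already optimal for $({\rm UQ'})$. If $z^*<\|x^*-a_0\|^2$, then Theorem \ref{thm:sdp2}(b) again gives $v({\rm SDP'})=v({\rm UQ'})$, so one more $({\rm SOCP'})$ solve finishes the problem within ${\rm SOCP}(n+1,p)$.

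The nontrivial branch is $z^*>\|x^*-a_0\|^2$. Here I would invoke Lemma \ref{lem:3}: an optimal solution of $({\rm UQ'})$ exists at which $n$ of the linear constraints \eqref{ineq:0} are active with linearly independent coefficient vectors, together with \eqref{ineq:1}. This reduces the task to enumerating all $\binom{p}{n}$ choices of $n$ indices from $\{1,\dots,p\}$, and for each choice (i) forming the corresponding $n\times n$ coefficient matrix $A$ and testing invertibility, (ii) writing $x(z)=A^{-1}(b-(a_0^Ta_0)e+ze)$ as in \eqref{xA}, (iii) substituting into $z=\|x(z)-a_0\|^2$ to obtain a univariate quadratic with at most two explicit real roots $z_1\ge z_2$, and (iv) checking feasibility of $x(z_1),x(z_2)$ with respect to the remaining $p-n$ linear inequalities. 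The dominant per-selection cost is the $O(n^3)$ linear-algebra step; the quadratic solve and the $O((p-n)n)$ feasibility test are absorbed into it. Taking the best admissible candidate (the one with largest $z$) gives an optimal solution of $({\rm UQ'})$, contributing $\binom{p}{n}\cdot O(n^3)$ to the running time.

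Since after the initial LP solve the algorithm executes \emph{either} one $({\rm SOCP'})$ solve \emph{or} the enumeration, but not both, the post-LP cost is $\max\{{\rm SOCP}(n+1,p),\,\binom{p}{n}\cdot O(n^3)\}$, and adding the initial ${\rm LP}(n+1,p)$ yields exactly \eqref{cpx}. The part I expect to require the most care is the correctness of the enumeration branch: one must verify that every optimal configuration covered by Lemma \ref{lem:3} is actually discovered, that the case $z^*>\|x^*-a_0\|^2$ is the only one where $({\rm LP'})$'s optimum can fail to reveal whether $v({\rm SDP'})=v({\rm UQ'})$, and that each $n$-subset indeed produces its candidate in $O(n^3)$ arithmetic operations; all other steps are routine case analysis.
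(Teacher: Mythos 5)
Your proposal is correct and follows essentially the same route as the paper, which itself presents Theorem~\ref{pol} as a summary of the preceding three-way case analysis (unbounded LP or $z^*\le\|x^*-a_0\|^2$ handled via one (SOCP$'$) solve by Theorems~\ref{thm:sdp1} and~\ref{thm:sdp2}, versus $z^*>\|x^*-a_0\|^2$ handled by the $\binom{p}{n}\cdot O(n^3)$ enumeration justified by Lemmas~\ref{lem:2} and~\ref{lem:3}). The accounting of the cost as ${\rm LP}(n+1,p)$ plus the maximum of the two mutually exclusive post-LP branches matches the paper exactly.
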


It is interesting to observe that, since (\ref{sduq}) already implies (\ref{zx}), there is actually no need to make the assumption (\ref{zx}) in Lemma \ref{lem:2}. Thus, the enumeration procedure itself is sufficient to find the optimal solution of (UQ$'$). That is, in the worst case, Theorem \ref{pol} could be improved as follows.
\begin{thm}\label{pol2}
Suppose either $n$ is fixed or $p=n+q$ with a fixed integer $q$, then $({\rm UQ})$ is strongly polynomially solvable in at most $\left(p\atop n\right)\cdot O(n^3)$ time.
\end{thm}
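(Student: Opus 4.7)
The plan is to dispense with the preliminary calls to (LP$'$) and (SOCP$'$) from the algorithm behind Theorem \ref{pol} and argue that the pure enumeration step alone already certifies an optimum of (UQ$'$). Concretely, one enumerates every subset $J\subseteq\{1,\ldots,p\}$ of size $n$ for which $\{a_j-a_0\}_{j\in J}$ are linearly independent, forces the $n$ corresponding inequalities in \eqref{ineq:0} to hold with equality so that $x(z)$ is given by the closed form \eqref{xA}, imposes the scalar equation $z=\|x(z)-a_0\|^2$ (which has at most two explicit real roots), and, for each root, checks primal feasibility of $x(z)$ against the remaining $p-n$ inequalities. The reported answer is the feasible candidate achieving the largest value of $z$.

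Correctness reduces to the claim that at least one enumerated candidate attains $v({\rm UQ}')$. When $v({\rm SDP}')>v({\rm UQ}')$, Theorems \ref{thm:sdp1} and \ref{thm:sdp2} together force $z^*>\|x^*-a_0\|^2$ at any (LP$'$)-optimum, so the hypothesis \eqref{zx} of Lemmas \ref{lem:2} and \ref{lem:3} is automatic and a vertex-like optimum of the desired form exists. In the complementary regime $v({\rm SDP}')=v({\rm UQ}')$, I would fall back to the strictly convex maximization \eqref{max:1}--\eqref{max:2} that already appears at the end of the proof of Lemma \ref{lem:3}: a strictly convex objective over a nonempty polytope attains its maximum at an extreme point, which automatically carries $n$ active linear constraints with linearly independent coefficient vectors. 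Either way the enumeration recovers an exact optimum without ever invoking (LP$'$) or (SOCP$'$).

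Once correctness is secured, the counting is routine. Each of the at most $\binom{p}{n}$ subsets triggers an $O(n^3)$ matrix inversion, $O(1)$ resolution of a univariate quadratic, and $O(np)$ comparisons to certify primal feasibility, so the dominant per-subset cost is $O(n^3)$ and the total is $\binom{p}{n}\cdot O(n^3)$. For fixed $n$ this is $O(p^n\cdot n^3)$, polynomial in $p$; for fixed $q=p-n\ge 0$ we have $\binom{p}{n}=\binom{n+q}{q}=O(n^q)$, polynomial in $n$. Since every step is an arithmetic operation (addition, multiplication, division, comparison, or extraction of a real square root) on the input data and the operation count is independent of the bit-lengths, the algorithm is strongly polynomial.

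The main obstacle is the correctness argument in the regime $v({\rm SDP}')=v({\rm UQ}')$, since Lemma \ref{lem:3} was phrased under hypothesis \eqref{zx}. The cleanest resolution is the strictly-convex-maximization reduction sketched above, which produces the required $n$ linearly independent active constraints at an extreme point of the polytope defined by \eqref{max:2}; an alternative route is a small perturbation of the centers $\{a_i\}$ designed to restore $v({\rm SDP}')>v({\rm UQ}')$, followed by a limiting argument on the enumerated candidate list. Pinning down one of these rigorously is the sole nonroutine step.
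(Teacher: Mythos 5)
Your algorithm is exactly the paper's: discard the $({\rm LP}')$/$({\rm SOCP}')$ preprocessing of Theorem \ref{pol} and run only the enumeration over size-$n$ index sets with linearly independent $\{a_j-a_0\}$; and the paper's entire justification is precisely your first observation, namely that \eqref{sduq} implies \eqref{zx}, so that Lemmas \ref{lem:2} and \ref{lem:3} apply without having to verify \eqref{zx}. Your operation count and the fixed-$n$ versus fixed-$q$ case analysis are also the intended ones.

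The difficulty you flag in the regime $v({\rm SDP}')=v({\rm UQ}')$ is, however, a genuine gap, and your proposed repair does not close it. The reduction to the strictly convex maximization \eqref{max:1}--\eqref{max:2} is derived in the proof of Lemma \ref{lem:3} only under \eqref{zx}: one needs case (c) of Theorem \ref{thm:sdp2} to identify the $({\rm LP}')$-optimal value $z^*$ with $v({\rm SOCP}')$ and hence to describe the $({\rm SOCP}')$-optimal set as a slice of the polytope \eqref{max:2}, so citing that reduction to cover the case where \eqref{zx} fails (or where $v({\rm LP}')=+\infty$ and no $(x^*,z^*)$ exists) is circular. Nor is the gap cosmetic: if one ball is strictly contained in the intersection of the others and the maximizer of $\|x-a_0\|^2$ over that ball is the global optimum, then exactly one constraint is active at the optimum of $({\rm UQ}')$, every system obtained by forcing $n$ of the constraints \eqref{ineq:0} together with \eqref{ineq:1} to be active has no real or no feasible solution, and the candidate list generated by \eqref{xA} is empty. (Concretely: $n=2$, $a_0=(5,0)$, the unit ball at the origin contained in two balls of radius $2$ centered at $(0,\pm 0.1)$; the optimum $(-1,0)$ has a single active constraint and all three two-constraint systems fail.) The paper's own one-sentence proof is silent on this case as well, so you have not overlooked anything the paper supplies; to make the statement rigorous one must either restrict to the regime in which \eqref{zx} holds, or enlarge the enumeration (e.g., over active sets of every size $k\le n$ combined with the stationarity condition that $x-a_0$ lie in the span of the active $a_j-a_0$), and your perturbation alternative would still need a limiting argument showing that the perturbed candidates converge to an optimizer of the original instance.
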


Finally, if the optimal solution of (UQ$'$) is not unique,  all the extreme points of the optimal solution set  are collected by the enumeration procedure.

\subsection{Polynomially solvable cases for ${\rm (CC_B)}$}
Notice that ${\rm (CC_B)}$ (\ref{ccb}) can be reformulated as
\begin{equation}
\min_{z}\left\{ f(z):=z^Tz+\max_{x\in \Omega} \{x^Tx-2z^Tx\}\right\}.\label{ccb2}
\end{equation}
Notice that $f(z)$ is strictly convex and nonsmooth.
%\begin{prop}
%We have 0 is a subdifferential of f(z) at $z^*$, that is $z^*\in conv{X^*}$ if and only if $z^*$ is the optimal solution of (${\rm CC_B}$), where $X^*$ is the set of all the optimal solution of $({\rm UQ})$.
%\end{prop}
Hence, ${\rm (CC_B)}$ is polynomially solved with the ellipsoid method \cite{N2004}, if the subgradient of $f(z)$ can be obtained in polynomial  time.

For the sake of completeness, we briefly present the ellipsoid method for solving the nonsmooth convex optimization:
\begin{eqnarray}\label{Q:1}
f^*=\min\left\{f(x):~x\in Q:=\{x\in\Bbb R^n:~\overline{f}(x)\leq 0\} \right\},
\end{eqnarray}
where $f(x)$ and $\overline{f}(x)$ are convex (possibly nonsmooth) functions, $Q$ is a bounded closed convex set with nonempty interior.
Let $g(x)$ and $\overline{g}(x)$ be the subgradients of $f(x)$ and $\overline{f}(x)$, respectively.
\begin{center}
\fbox{\shortstack[l]{
{\bf Ellipsoid method \cite{N2004}}\\
1.~Initialize $y_0\in \Bbb R^n$. Choose $R>0$ such that $\|y-y_0\|\le R$ for all $y\in Q$.\\
~~~~Set $H_0=R^2\cdot I$ and $k=0$. \\
2.~Repeat until a stopping criterion is reached:\\
~~~~$g_k=\left\{
 \begin{array}{l}
 g(y_k),~{\rm if}~y_k\in Q,\\
 \overline{g}(y_k),~{\rm if}~y_k\notin Q,
 \end{array}
 \right.$\\
~~~~$y_{k+1}=y_k-\frac{1}{n+1}\frac{H_kg_k}{\sqrt{g_k^TH_kg_k}}$,~
$H_{k+1}=\frac{n^2}{n^2-1}\left(H_k-\frac{2}{n+1}\frac{H_kg_kg_k^TH_k}{g_k^TH_kg_k}\right)$,\\
~~~~$k:=k+1$.
}}
\end{center}

\begin{thm}[Theorem 3.2.8, \cite{N2004}]\label{thm:elip}
%For the problem (\ref{Q:1}),
Let $f(x)$ (\ref{Q:1}) be Lipschitz continuous on $\{x\in\Bbb R^n:\|x-x^*\|\le R\}$ with some constant $M$. Assume that there exists some $\rho > 0$ and $\overline{x}\in Q$ such that  $\{x\in\Bbb R^n:\|x-\overline{x}\|\le \rho\}\subseteq Q$, then for
\begin{equation}
k>2(n+1)^2\log(R/\rho),\label{ep:1}
\end{equation}
we have $Q\cap \{y_0,y_1,\ldots,y_{k-1}\}\neq \emptyset$
%Let $f_k^*$ be the minimal among all these feasible objective function values $f(y_j)$.
and
\begin{equation}
\min_{0\le j\le k,y_j\in Q}f(y_j)-f^*\leq \frac{1}{\rho}MR^2\cdot e^{-\frac{k}{2(n+1)^2}}.\label{ep:2}
\end{equation}
%where $i(k)=$ number of points $y_j,~0\leq j<k$, such that $y_j\in Q$ and
\end{thm}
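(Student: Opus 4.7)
The plan is to combine the classical volume shrinkage of the ellipsoids $E_k:=\{y:(y-y_k)^TH_k^{-1}(y-y_k)\le 1\}$ produced by the algorithm with a sublevel-set localization argument. Define the running best
\[
f_k^{\min}:=\min\{f(y_j):0\le j<k,\ y_j\in Q\},
\]
with the convention $f_k^{\min}=+\infty$ when no such $y_j$ exists. I will first establish the invariant $\{x\in Q:f(x)<f_k^{\min}\}\subseteq E_k$, and then contradict it by volume comparison against a shrunken copy of $Q$ around a minimizer, forcing $f_k^{\min}$ close to $f^*$.

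\textbf{Volume decay and invariant.} The first ingredient is the standard fact that the rank-one update in the algorithm makes $E_{k+1}$ the minimum-volume ellipsoid covering the half-ellipsoid $E_k\cap\{y:g_k^T(y-y_k)\le 0\}$ and satisfies $\mathrm{vol}(E_{k+1})\le e^{-1/(2(n+1))}\mathrm{vol}(E_k)$; since $E_0$ is the ball of radius $R$ around $y_0$, iteration yields $\mathrm{vol}(E_k)\le V_n R^n e^{-k/(2(n+1))}$, where $V_n$ is the volume of the unit Euclidean ball. The invariant is then proved by induction on $k$. If $y_k\notin Q$, the subgradient inequality for $\overline f$ gives $\overline g(y_k)^T(x-y_k)\le \overline f(x)-\overline f(y_k)<0$ for every $x\in Q$, so any $x\in Q\cap E_k$ lies in $E_{k+1}$, while $f_{k+1}^{\min}=f_k^{\min}$. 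If $y_k\in Q$, convexity of $f$ gives $g(y_k)^T(x-y_k)\le f(x)-f(y_k)<0$ for any $x\in Q$ with $f(x)<f(y_k)$; since $f_{k+1}^{\min}\le f(y_k)$, the relevant sublevel set is again preserved in $E_{k+1}$.

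\textbf{Volume comparison and conclusion.} For $\tau\in(0,1]$ set $Q_\tau:=(1-\tau)x^*+\tau Q$, where $x^*$ is any minimizer of $f$ on $Q$. Convexity of $f$ combined with $M$-Lipschitzness gives $f(x)-f^*\le\tau MR$ for every $x\in Q_\tau$, while the assumption that $Q$ contains a ball of radius $\rho$ around $\overline x$ yields $\mathrm{vol}(Q_\tau)\ge(\tau\rho)^n V_n$. If $f_k^{\min}>f^*+\tau MR$, the invariant forces $Q_\tau\subseteq E_k$, and combining with the volume bound gives $(\tau\rho)^n\le R^n e^{-k/(2(n+1))}$, i.e.\ $\tau\le(R/\rho)e^{-k/(2n(n+1))}$. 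Letting $\tau$ approach this threshold from above yields $f_k^{\min}-f^*\le(MR^2/\rho)e^{-k/(2n(n+1))}$, which implies (\ref{ep:2}) because $2n(n+1)<2(n+1)^2$; the case $\tau=1$ in turn shows that if no $y_j$ lies in $Q$ then $Q\subseteq E_k$, which contradicts (\ref{ep:1}) and thus forces $Q\cap\{y_0,\ldots,y_{k-1}\}\ne\emptyset$. The only step demanding real calculation is the shrinkage factor $e^{-1/(2(n+1))}$ in the volume estimate: it requires expanding $\det(H_{k+1})/\det(H_k)$ via the matrix determinant lemma, optimizing over the split direction, and applying the bound $1+t\le e^t$, after which the rest of the argument is clean bookkeeping around the invariant.
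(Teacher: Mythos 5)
The paper does not prove this statement at all: it is imported verbatim as Theorem 3.2.8 of Nesterov \cite{N2004}, so there is no in-paper proof to compare against. Your sketch is essentially the cited source's own argument (volume decay $e^{-1/(2(n+1))}$ per step, the invariant that the strict sublevel set of the running best inside $Q$ stays in $E_k$, and the volume comparison against $Q_\tau=(1-\tau)x^*+\tau Q$), and it is correct as far as it goes --- indeed it delivers the slightly sharper exponent $k/(2n(n+1))$, which implies both \eqref{ep:1} and \eqref{ep:2} as stated.
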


Now, we employ the above ellipsoid method \cite{N2004} to solve ${\rm(CC_B)}$ (\ref{ccb2}). Let $z^*$ be the optimal solution of ${\rm(CC_B)}$. As it is well-known that $z^*\in \Omega$, we can rewrite  (\ref{ccb2}) in  the formulation as in (\ref{Q:1}),
%\[
%\min_{z\in Q\bigcap \overline{f}\leq 0}\max_{x\in \Omega}\|x-z\|^2,
%\]
where
\[
Q=\{z\in\Bbb R^n:\|z-a_1\|\leq r_1\}~{\rm and}~\overline{f}(z)= \|z-a_1\|^2-r_1^2.
\]
Since $Q$ itself is a ball, we have $\rho=R=r_1$. Notice that $\overline{f}(z)$ is a quadratic function and hence differentiable. Now, we compute the subgradient of $f(z).$   Let $x^*(z)={\rm arg}\max_{x\in \Omega}\{x^Tx-2z^Tx\}$ in evaluating $f(z)$ (\ref{ccb2}). It is not difficult to verify that
\[
g(z)=2(z-x^*(z))
\]
is a subgradient of $f(z)$ at $z$, i.e.,
%  Then with the convexity of $f(z)$, we have $\forall~z_1,~z_2$,
\[
f(z_2)\geq f(z_1)+g(z_1)^T(z_2-z_1),~\forall z_1,z_2\in Q.
\]
Therefore, we have
\begin{eqnarray*}
f(z_1)-f(z_2)&\leq &g(z_1)^T(z_1-z_2)\\
&\leq&\|g(z_1)\|\cdot\|z_1-z_2\|\\
&\leq&2(\|z_1\|+\|x^*(z_1)\|)\|z_1-z_2\|\\
&\leq&4(\|a_1\|+r_1)\|z_1-z_2\|.
\end{eqnarray*}
The above  inequalities still hold if $z_1$ and $z_2$ are exchanged. Thus,
$f(z)$ is Lipschitz continuous on $Q$ with a constant $M=4(\|a_1\|+r_1)$.

For our case ${\rm (CC_B)}$, the assumptions of Theorem \ref{thm:elip} are satisfied. Moreover, the conclusions (\ref{ep:1}) and (\ref{ep:2}) are reduced to $k>0$ (as $\rho=R$) and
\begin{equation}
\min_{0\le j\le k,y_j\in Q}f(y_j)-f^*\leq  4(\|a_1\|+r_1)r_1\cdot e^{-\frac{k}{2(n+1)^2}},
\label{ep:3}
\end{equation}
respectively.
According to (\ref{ep:3}) and Theorem \ref{pol}, we have the following complexity result.
%And we can get an $\epsilon$-solution in $2(n+1)^2\ln\frac{4(\|a_1\|+r_1)r_1}{\epsilon}$ steps.
\begin{cor}
Suppose either $n$ is fixed or $p=n+q$ with a fixed integer $q$.
An $\epsilon$-approximate solution of ${\rm (CC_B)}$ (i.e., a vector $\widetilde{z}$ satisfying  $f(\widetilde{z})\le v{\rm (CC_B)}+\epsilon$) can be found in polynomial time. More precisely, the complexity is at most
\[
T(n,p)\cdot O(n^2)\log\frac{4(\|a_1\|+r_1)r_1}{\epsilon},
\]
where $T(n,p)$ is defined in (\ref{cpx}).
\end{cor}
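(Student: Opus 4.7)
The plan is to simply combine the ellipsoid method displayed above with the subgradient-evaluation routine furnished by Theorem \ref{pol}, observing that under either of the two hypotheses the cost $T(n,p)$ of one evaluation is polynomial.

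First, I would cast ${\rm (CC_B)}$ in the form (\ref{ccb2}) and confirm the three ingredients required by Theorem \ref{thm:elip}. Take $Q=\{z\in\Bbb R^n:\|z-a_1\|\le r_1\}$ so that $\overline{f}(z)=\|z-a_1\|^2-r_1^2$ is differentiable and so that the well-known fact $z^*\in\Omega\subseteq Q$ places the optimum inside. Since $Q$ is itself a ball, the enclosing radius $R$ and the inscribed radius $\rho$ both equal $r_1$. The Lipschitz estimate $M=4(\|a_1\|+r_1)$ on $f$ has already been derived above via the subgradient $g(z)=2(z-x^*(z))$, where $x^*(z)$ is any maximizer of the inner (UQ).

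Next I would invoke Theorem \ref{thm:elip}: the iterate-count bound (\ref{ep:1}) reduces to $k>0$ since $R=\rho$, and the accuracy bound (\ref{ep:3}) shows that
\[
\min_{0\le j\le k,\, y_j\in Q}f(y_j)-v({\rm CC_B})\le 4(\|a_1\|+r_1)r_1\cdot e^{-k/(2(n+1)^2)}.
\]
Requiring the right-hand side to be at most $\epsilon$ yields
\[
k\ge 2(n+1)^2\log\frac{4(\|a_1\|+r_1)r_1}{\epsilon}=O(n^2)\log\frac{4(\|a_1\|+r_1)r_1}{\epsilon}.
\]
After $k$ iterations, returning $\widetilde{z}\in\arg\min_{0\le j\le k,\, y_j\in Q}f(y_j)$ gives an $\epsilon$-approximate solution.

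It remains to bound the per-iteration cost. The ellipsoid recursion updates $(y_k,H_k)$ using $O(n^2)$ arithmetic operations, while the only nontrivial work is the production of a subgradient $g(y_k)=2(y_k-x^*(y_k))$, which requires one exact solve of the inner (UQ). By Theorem \ref{pol}, and under the hypothesis that either $n$ is fixed or $p-n$ is a fixed nonnegative integer (so that $\binom{p}{n}$ is polynomial in the input size), this costs at most $T(n,p)$ time as defined in (\ref{cpx}). Multiplying the number of iterations by the per-iteration cost yields the claimed bound
\[
T(n,p)\cdot O(n^2)\log\frac{4(\|a_1\|+r_1)r_1}{\epsilon}.
\]
The only delicate point is the appeal to Theorem \ref{pol} for exact (or sufficiently accurate) subgradient information at every ellipsoid iterate; once that is granted, the rest is a routine assembly of the pieces established earlier in the paper.
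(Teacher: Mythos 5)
Your proposal is correct and follows essentially the same route as the paper: cast ${\rm (CC_B)}$ in the form (\ref{ccb2}), run the ellipsoid method on $Q=\{z:\|z-a_1\|\le r_1\}$ with $\rho=R=r_1$ and Lipschitz constant $M=4(\|a_1\|+r_1)$, bound the iteration count from (\ref{ep:3}), and charge each iteration the cost $T(n,p)$ of an exact inner (UQ) solve via Theorem \ref{pol}. No gaps.
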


%\begin{rem}
%There is a room to improve the ellipsoid method for solving ${\rm (CC_B)}$. In each iteration, the ellipsoid $E_{k+1}$ is selected to contain half of the previous ellipsoid $E_k\bigcap \left\{z: g_k^T(z-y_k)\leq 0\right\}$. Since $Q\subseteq E_k$, it is sufficient to select a smaller ellipsoid $E_{k+1}$ to directly cover $Q\bigcap \left\{z: g_k^T(z-y_k)\leq 0\right\}$ by noting that  $Q$ is a ball.
%%In the ellipsoid method(Algorithm 1), we find an ellipsoid in each iteration satisfying: $E_{k+1}\supseteq \left\{z_k: E_k\bigcap \left\{x: g_k^T(x-z_k)\leq 0\right\}\right\}$. Actually we can fasten the algorithm by letting the enclosing ellipsoid satisfy: $E_{k+1}\supseteq \left\{z_k: E_k\bigcap Q\right\}$ and in this case, it is always true that $z_k\in Q$.
%\end{rem}

\section{Conclusion}
Finding the smallest ball enclosing the intersection of $p$ given balls in dimension $n$, denoted by ${\rm(CC_B)}$, is a classical mathematical problem. From the view of optimization, ${\rm(CC_B)}$ is a minimax problem and the inner maximization problem is a nonconvex uniform quadratic optimization (UQ). It is the first time to show  ${\rm(CC_B)}$ is NP-hard, though ${\rm(CC_B)}$ in the plane is efficiently and strongly polynomially solved.  It is known  that when $p\le n$ (UQ) enjoys the strong duality. In this paper, we present a simple linear programming ${\rm(LP)}$ relaxation for (UQ), which leads to a more general sufficient condition for the strong duality of (UQ). Based on ${\rm(LP)}$, we propose a simple derivation of the standard convex quadratic programming (${\rm SQP}$) relaxation for ${\rm(CC_B)}$. Strong duality of (UQ) implies that (${\rm SQP}$) is tight when $p\le n$. However, this is not true when $p>n$. Generally, the first approximation bound of the solution obtained by (${\rm SQP}$) is established, which is independent of the number $p$.  Finally, with the help of  ${\rm(LP)}$, we show the polynomial solvability of ${\rm(CC_B)}$ under the  assumption either $n$ or $p-n>0$ is fixed. However, it is not known whether ${\rm(CC_B)}$ is \emph{strongly}  polynomially solvable even when  $n=3$.

\section*{Acknowledgments}
The authors are grateful to the two anonymous referees for very helpful comments and suggestions.

\end{document}